\newtheorem{theorem}{Theorem}
\newtheorem{proposition}[theorem]{Proposition}
\newtheorem{lemma}[theorem]{Lemma}
\newtheorem{corollary}[theorem]{Corollary}
\theoremstyle{definition}
\newtheorem{definition}[theorem]{Definition}
\newtheorem{conjecture}[theorem]{Conjecture}
\newtheorem{example}[theorem]{Example}
\numberwithin{theorem}{section}
\newcommand{\PP}{\mathbb{P}}
\newcommand{\RR}{\mathbb{R}}
\newcommand{\CC}{\mathbb{C}}
\newcommand{\minmax}{\mathop{{\rm min/max}}}
\title{\bf Grassmann and Flag Varieties in Linear Algebra, Optimization, and Statistics:\\ An Algebraic Perspective}
\author{Hannah Friedman and Serkan Ho\c{s}ten}
\date{}
\begin{document}
\maketitle

\begin{abstract}
  \noindent
 Grassmann and flag varieties lead many lives in pure and applied mathematics. Here we focus on the algebraic complexity of solving various problems in linear algebra and statistics as optimization problems over these varieties. The measure of the algebraic complexity is the amount of complex critical points of the corresponding optimization problem.   
 After an exposition of different realizations of these manifolds as algebraic varieties we present a sample of optimization problems over them and we compute their algebraic complexity. 
\end{abstract}

\section{Introduction}
A flag manifold or flag variety parametrizes nested sequences of subspaces  of fixed dimension, known as flags, of a finite dimensional vector space $V$. In this article, we take this vector space to be $V= \RR^n$ or $V=\CC^n$. We fix $r$ positive integers $1 \leq k_1 < k_2 < \cdots < k_r \leq n$ and consider 
$${\rm Fl}({\bf k};n) ={\rm Fl}(k_1, \ldots, k_r;n) = \{W_1 \subset W_2 \subset \cdots \subset W_r : W_i \subseteq V, \, \dim(W_i)= k_i, \, 1\leq i\leq r\}.$$
The set of flags ${\rm Fl}({\bf k};n)$ is both a (real or complex) manifold and a (real or complex) algebraic variety. 
As an algebraic variety, it lives in some affine space $\mathbb{A}^N$ or projective space $\PP^N$. 
In this case, we would like to know the defining equations, i.e., the polynomials whose common zero set is ${\rm Fl}({\bf k};n)$.
We also ask to know algebraic invariants of ${\rm Fl}({\bf k};n)$ such as the dimension and the degree where the latter depends on the particular embedding.

In Section~\ref{sec:flaglives}, we introduce and discuss several realizations of flag varieties, namely, the Stiefel coordinates, Pl\"{u}cker embedding, projection embedding, and  isospectral model.
The Grassmannian ${\rm Gr}(k,n)$ of $k$-dimensional subspaces of $V$ is a flag variety with $r=1$. It also appears with different realizations such as its Pl\"ucker and projection embeddings \cite{DFRS, fulton}. 
For each realization of the flag variety, we prove that the natural equations defining these varieties generate prime ideals. 
We discuss the relationships between these different embeddings. 
All of these realizations have appeared in the literature; see \cite{DFRS, fulton, LY24b, YWL}.
We collect them in one place to help anyone, especially those from the applied and computational algebraic geometry and algebraic statistics communities, to get a foot in the door. 

Our theme is optimization on Grassmann and flag varieties. As a starting point, consider the eigenvalue problem for symmetric matrices. Let $A$ be an $n \times n$ real symmetric matrix and consider the problem of minimizing or maximizing the Rayleigh quotient: 
\begin{align} \label{Rayleigh}
    \minmax_{z \neq 0}
    & \,\,\, \frac{z^T A z}{z^Tz}.
\end{align}
It is known that the optimal value for this eigenvalue problem is the minimum (respectively, maximum) eigenvalue of $A$. A formulation which avoids the fraction in the objective function is the quadratic optimization problem over  $S^{n-1}$, the sphere of dimension $n-1$:
\begin{align} \label{eigenvalue-over-sphere}
    \minmax_{z \in S^{n-1}}
    & \,\,\, z^T A z.
\end{align}
The optimal solution to the above {\it eigenvalue problem} (\ref{Rayleigh}) and (\ref{eigenvalue-over-sphere}) is a unit eigenvector corresponding to the minimum/maximum eigenvalue. 
The critical points of this constrained optimization problem are precisely the unit eigenvectors of $A$. 
The choice of sign for each eigenvector gives us $2n$ critical points.

The eigenvalue problem has the following generalization to compute multiple eigenvectors at once  
where we replace the vector $z$ in (\ref{eigenvalue-over-sphere}) with an $n \times k$ matrix $Z$ consisting of orthonormal columns. We solve a quadratic optimization problem over the Stiefel manifold called the \emph{multi-eigenvector problem}:
\begin{align} \label{multi-eigenvector}
    \minmax_{Z^TZ = {\rm Id}_k}
    & \,\,\, {\rm trace}(Z^T A Z).
\end{align}
The critical points of this problem are the sets of orthogonal $k$-frames consisting of unit eigenvectors of $A$, up to the action
of the orthogonal group ${\rm O}(k)$ over the complex numbers. 

\begin{theorem}\label{thm:eigen}
  Let $A$ be a generic real symmetric $n \times n$ matrix and let $Z$ be an $n \times k$ variable matrix. 
  The algebraic set of complex critical points of the multi-eigenvector problem (\ref{multi-eigenvector}) is
  $$ \bigsqcup_{\{i_1, \ldots, i_k\} \in \binom{[n]}{k}} \{[u_{i_1} \, u_{i_2} \, \cdots \, u_{i_k}] Q \, : \, Q \in {\rm O}(k)\} $$ 
  where $u_1, \ldots, u_n$ is an orthonormal eigenbasis of $A$. This algebraic set is a disjoint union of $\binom{n}{k}$ varieties isomorphic to ${\rm O}(k)$; it has dimension $\dim({\rm O}(k)) = \binom{n}{k}$
  and degree $\deg({\rm O}(k)) \binom{n}{k}$.
\end{theorem}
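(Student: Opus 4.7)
The plan is to write down the Lagrange conditions for (\ref{multi-eigenvector}), recognize that they force the column span of $Z$ to be $A$-invariant, enumerate the $A$-invariant $k$-dimensional subspaces using the spectral decomposition of $A$, and then describe the fiber of critical points over each such subspace as a coset of ${\rm O}(k)$.

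Forming the Lagrangian $L(Z,\Lambda) = {\rm trace}(Z^T A Z) - {\rm trace}(\Lambda(Z^T Z - {\rm Id}_k))$ with symmetric multiplier matrix $\Lambda$, the stationarity condition in $Z$ yields $AZ = Z\Lambda$, and multiplying by $Z^T$ on the left gives $\Lambda = Z^T A Z$, confirming that $\Lambda$ is indeed symmetric. Thus a complex critical point is a pair $(Z,\Lambda)$ satisfying $AZ = Z\Lambda$ and $Z^T Z = {\rm Id}_k$. The first equation means precisely that $W := \text{col}(Z) \subseteq \CC^n$ is an $A$-invariant subspace.

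Since a generic real symmetric $A$ has $n$ distinct real eigenvalues with a real orthonormal eigenbasis $u_1, \ldots, u_n$, it diagonalizes over $\CC$ with distinct spectrum. Hence its $A$-invariant $k$-dimensional subspaces of $\CC^n$ are exactly the $\binom{n}{k}$ coordinate spans $W_I := \text{span}(u_i : i \in I)$ for $I \in \binom{[n]}{k}$. For each such $I$, set $U_I = [u_{i_1} \, \cdots \, u_{i_k}]$; any $Z$ with $\text{col}(Z) = W_I$ is of the form $Z = U_I M$ for some invertible $M \in \CC^{k \times k}$. Because the $u_i$ are real and orthonormal, $U_I^T U_I = {\rm Id}_k$ under the standard bilinear form on $\CC^n$, so the constraint $Z^T Z = {\rm Id}_k$ reduces to $M^T M = {\rm Id}_k$, i.e., $M \in {\rm O}(k)$. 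This yields the disjoint-union description in the theorem, with each component $U_I \cdot {\rm O}(k)$ isomorphic as an algebraic variety to ${\rm O}(k)$. The dimension and degree assertions then follow by equidimensionality: the dimension of the union equals $\dim({\rm O}(k))$, and the degree of a disjoint union is the sum of component degrees, giving $\binom{n}{k}\deg({\rm O}(k))$.

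The only essential use of genericity is in the enumeration of $A$-invariant subspaces: eigenvalue distinctness is exactly what guarantees that the list $\{W_I\}_{I \in \binom{[n]}{k}}$ is complete, since without it one could get positive-dimensional families of invariant subspaces and the critical locus would not be a finite union of ${\rm O}(k)$-cosets. The remaining steps are routine manipulations of the Lagrange system. The one point requiring mild care is the fiber parameterization, where one must use the complex bilinear form $z^T w$ rather than the Hermitian form; this is harmless here because the eigenvectors of the real symmetric $A$ can be chosen real, making the complex and real orthonormality conditions on $U_I$ coincide.
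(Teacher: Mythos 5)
Your proof is correct, but it takes a different route through the middle of the argument than the paper does. Both arguments start from the same Lagrange system: a critical point is a $Z$ with $Z^TZ={\rm Id}_k$ and $AZ=ZM$ for a symmetric $M$ (and, as you note, $M=Z^TAZ$ is then forced). The paper then analyzes $M$ itself: its Lemma~\ref{lem:diagonalization} shows $M$ is orthogonally diagonalizable, writes $M=Q^T\Lambda Q$, and concludes that the columns of $ZQ^T$ are orthonormal eigenvectors of $A$; this lemma only needs $A$ full rank and orthogonally diagonalizable, and it is reused later for Theorem~\ref{thm:svd}. You instead bypass $M$ entirely: $AZ=ZM$ says ${\rm col}(Z)$ is $A$-invariant, and genericity (distinct eigenvalues) lets you classify all $k$-dimensional invariant subspaces as the $\binom{n}{k}$ spans of eigenvectors, after which the fiber over each subspace is visibly a coset $U_I\,{\rm O}(k)$. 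Your route is more elementary and makes the role of genericity transparent (it is exactly what makes the union finite and disjoint), and it connects cleanly to the Grassmannian picture of Theorem~\ref{thm:eigen-pgr-degree}; the paper's route isolates a lemma that applies under weaker hypotheses on $A$ and that serves double duty elsewhere. Two small points you should make explicit: over $\CC$ the identity $Z^TZ={\rm Id}_k$ still forces ${\rm rank}(Z)=k$ (if $Zv=0$ then $v=Z^TZv=0$), which is what justifies both that ${\rm col}(Z)$ is $k$-dimensional and that $M$ in $Z=U_IM$ is invertible; and for the degree claim, abstract isomorphism with ${\rm O}(k)$ does not preserve degree --- you should say that $Q\mapsto U_IQ$ is an injective \emph{linear} map, so each component is a linear image of ${\rm O}(k)$ and hence has degree $\deg({\rm O}(k))$ in the ambient space. (The dimension figure $\binom{n}{k}$ in the theorem statement is a typo in the paper for $\dim {\rm O}(k)=\binom{k}{2}$; your phrasing is consistent with the correct value.)
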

The degree of the special orthogonal group was computed in \cite[Theorem 1.1]{BBBKR}.
Because the multi-eigenvector problem  is ${\rm O}(k)$-invariant, it is more naturally considered over the Grassmannian. 
In Section~\ref{sec:eigenvalue}, we will restate this problem over the Grassmannian in projection coordinates \cite{DFRS}, where it has $\binom{n}{k}$ critical points (Theorem~\ref{thm:eigen-pgr-degree}).
To formulate this problem over the flag variety, one can use isospectral coordinates \cite{LY24b}. 
In the isospectral formulation, the set of critical points is a disjoint union of products of smaller orthogonal groups (Theorem~\ref{thm:eigen-iso-degree}).
In both Theorems~\ref{thm:eigen-pgr-degree} and \ref{thm:eigen-iso-degree}, we give explicit descriptions of the critical points.

When the set of critical points is finite, its size, the {\em algebraic degree}, provides a measure for the complexity of the problem. In particular, any optimal solution is an algebraic function of the input data of this degree; see \cite{JR09, JRS10,R12}.

In Section~\ref{sec:heterogeneous}, we study the heterogeneous quadratics minimization problem, which is used as a benchmark for testing numerical methods for Riemannian optimization \cite{oviedo2022}. 
Given symmetric $n \times n$ matrices $A_1, \ldots, A_k$, we seek to find $Z=[Z_1 \cdots Z_k]$ such that
\begin{align} \label{heteroquads}
    \minmax_{Z^TZ = {\rm Id}_k}
    & \,\,\, \sum_{i=1}^k Z_i^T A_i Z_i.
\end{align}
If $A_1 = \cdots = A_k$, one recovers the multi-eigenvector problem \eqref{multi-eigenvector}. 
However, for a generic choice of $A_1, \ldots, A_k$, this problem is not an optimization problem over the Grassmannian, but rather over the flag variety ${\rm Fl}(1, \ldots, k; n)$. 
We formulate the problem in projection coordinates and observe that in this formulation the heteregoneous  quadratics minimization problem is a generic linear optimization problem over the flag variety. 
We use numerical methods to compute the critical point counts for small values of $k$ (Table \ref{table:heteroquads}). 
We also report our observation based on our computational experiments that taking $A_1, \ldots, A_k$ to be generic diagonal matrices does not change the number of critical points. For $n=3$ and $k=2$, we show that this number is $40$ (Proposition \ref{prop:diagonal-3-2}).
The code for these computations is available at \url{https://github.com/hannahfriedman/flag_optimization_algebraic_perspective}.

In Section~\ref{sec:statisticsproblems}, we examine two problems from statistics \cite{YWL}: canonical correlation analysis and correspondence analysis. 
In these problems, we consider a rectangular matrix $A$ instead of a symmetric matrix and the critical points of the problems are given by the singular value decomposition of $A$. In the first case, we give a formula for the finite number of critical points which correspond to pairs of left and right singular vectors of $A$ (Theorem \ref{thm:canonical}). 
The second problem can be naturally formulated over a product of Grassmannians, and we give a complete description of the critical points in this case as well (Theorem \ref{thm:svd}).

\section{The Many Lives of the Flag Variety}\label{sec:flaglives}
We start with the observation 
that the set of partial flags $0 = W_0 \subset W_1 \subset W_2 \subset \cdots \subset W_r \subset V$ with $\dim(W_i) = k_i$ are in bijection with the 
sequence of subspaces 
$(W_1/W_0, W_2/W_1, W_3/W_2, \ldots, W_r/W_{r-1})$ where $W_i/W_{i-1}$ is viewed as a subspace in $V/W_{i-1}$ for $i=1, \ldots, r$. This gives a formula for the dimension of ${\rm Fl}({\bf k};n)={\rm Fl}(k_1, \ldots, k_r;n)$.
\begin{proposition} \label{dim-formula}
 The dimension of ${\rm Fl}({\bf k};n)$ equals $\sum_{i=1}^r (k_i - k_{i-1})(n-k_i)$. In particular, the dimension of the complete flag variety ${\rm Fl}(1, 2, \ldots,n-1; n)$ is $n(n-1)/2$.   
\end{proposition}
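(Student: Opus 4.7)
My plan is to induct on the number $r$ of subspaces in the flag, using the forgetful map that discards the largest subspace. For the base case $r=1$, the flag variety is simply the Grassmannian ${\rm Gr}(k_1,n)$, whose dimension $k_1(n-k_1)$ is standard and agrees with the claimed formula under the convention $k_0 = 0$.

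For the inductive step, I would consider the projection
\[
\pi\colon {\rm Fl}(k_1,\ldots,k_r;n) \longrightarrow {\rm Fl}(k_1,\ldots,k_{r-1};n), \qquad (W_1\subset\cdots\subset W_r) \longmapsto (W_1\subset\cdots\subset W_{r-1}).
\]
By the bijection recorded just before the proposition, the fiber over a flag $(W_1\subset\cdots\subset W_{r-1})$ parametrizes subspaces $W_r \supset W_{r-1}$ of dimension $k_r$, equivalently, $(k_r - k_{r-1})$-dimensional subspaces of the quotient $V/W_{r-1}$. That fiber is therefore isomorphic to ${\rm Gr}(k_r - k_{r-1},\, n - k_{r-1})$, which has dimension $(k_r - k_{r-1})(n - k_r)$. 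To add dimensions along $\pi$, I would note that ${\rm GL}_n$ acts transitively on both source and target and makes $\pi$ equivariant, so every fiber is isomorphic to the one over a reference flag and the fibers are equidimensional; by the standard dimension theorem for surjective morphisms of irreducible varieties, $\dim {\rm Fl}(k_1,\ldots,k_r;n) = \dim {\rm Fl}(k_1,\ldots,k_{r-1};n) + (k_r - k_{r-1})(n - k_r)$, and the induction hypothesis then yields the general formula.

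For the complete flag variety, substituting $k_i = i$ collapses the sum to $\sum_{i=1}^{n-1}(n - i) = n(n-1)/2$. The only mildly delicate step in the whole argument is the additivity of dimensions along $\pi$; this is not a true obstacle once the ${\rm GL}_n$-homogeneity of both source and target is invoked, but it is the one point beyond the set-theoretic bijection that warrants a brief structural remark.
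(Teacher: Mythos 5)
Your proof is correct, but it is organized differently from the paper's. The paper uses the bijection recorded just before the proposition, sending a flag to the sequence of quotients $W_i/W_{i-1} \subseteq V/W_{i-1}$, which identifies ${\rm Fl}({\bf k};n)$ all at once with (the points of) ${\rm Gr}(k_1,n)\times {\rm Gr}(k_2-k_1,n-k_1)\times\cdots\times{\rm Gr}(k_r-k_{r-1},n-k_{r-1})$, and then simply adds up the dimensions $(k_i-k_{i-1})(n-k_i)$ of the factors. You instead peel off the largest subspace via the forgetful map and induct on $r$, with each fiber a single Grassmannian ${\rm Gr}(k_r-k_{r-1},\,n-k_{r-1})$ of dimension $(k_r-k_{r-1})(n-k_r)$, and you justify additivity of dimension by ${\rm GL}_n$-equivariance (so all fibers are isomorphic) together with the fiber-dimension theorem for surjective morphisms of irreducible varieties. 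The two arguments rest on the same underlying decomposition of a flag into successive relative positions; the difference is that your inductive fibration makes precise exactly the point the paper treats informally --- the ``product'' in the paper has factors that depend on the earlier choices, so adding dimensions is really a fibration statement --- at the cost of invoking a bit more machinery (irreducibility of the flag variety, which you should note follows from transitivity of the connected group ${\rm GL}_n$, plus the dimension theorem). Both routes reduce to the known Grassmannian dimension, and the complete-flag computation $\sum_{i=1}^{n-1}(n-i)=n(n-1)/2$ is identical in the two treatments.
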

\begin{proof} The above bijection 
implies that there is a
bijection from ${\rm Fl}({\bf k};n)$ to the points
in 
$${\rm Gr}(n,k_1) \times {\rm Gr}(k_2-k_1, n-k_1) \times \cdots \times {\rm Gr}(k_r-k_{r-1}, n-k_{r-1}).$$
Since the $i$th Grassmannian above has dimension $(k_i-k_{i-1})(n-k_{i-1})$ the formula follows. 
For the complete flag variety $k_i - k_{i-1} = 1$ for all $i =1, \ldots, r$, and again the formula follows. 
\end{proof}

The flag variety ${\rm Fl}({\bf k};n)$ is first and foremost conceived as a real or complex manifold. We can associate to a flag $W_1 \subset W_2 \subset \cdots \subset W_r \subset V$ an orthogonal matrix $Q$ whose first $k_i$ columns comprise an orthonormal basis of $W_i$ for $i = 1, \ldots, r$. The smaller orthogonal group ${\rm O}(k_i-k_{i-1})$ acts on the columns $Q_{k_{i-1}+1}, \ldots, Q_{k_i}$ by right multiplication without altering the flag. Therefore
$${\rm Fl}(k_1, \ldots, k_r; n) \, \simeq \, 
{\rm O}(n) \, / \, {\rm O}(k_1) \times {\rm O}(k_2-k_1) \times \cdots \times {\rm O}(n-k_r).$$
It is instructive to work out that the dimension of the quotient on the right hand side 
$$\binom{n}{2} - \binom{k_1}{2} - \binom{k_2-k_1}{2} - \cdots - \binom{k_r-k_{r-1}}{2} - \binom{n-k_r}{2}$$
gives the formula for the dimension of ${\rm Fl}({\bf k};n)$ in Propositon \ref{dim-formula}. A related formulation utilizes the {\it Stiefel manifold} $\mathcal V_{k,n}$. 
\begin{definition} The Stiefel manifold $\mathcal V_{k,n}$ is the set of orthonormal $k$-frames. Namely, it consists of $n \times k$ matrices $Z$ where $Z^TZ = {\rm Id}_k$.
\end{definition}
The Stiefel manifold $\mathcal V_{k,n}$ is an affine algebraic variety in $\mathbb{A}^{nk}$.
When $k=n$, $\mathcal V_{k,n} = {\rm O}(n)$ and it has two irreducible components. 
Some of the contents of the following theorem are known, for example in \cite[Lemma 2.4]{BG}.
\begin{theorem} \label{thm:stiefel}
The Stiefel manifold $\mathcal V_{k,n}$ has dimension
$$\dim(\mathcal V_{k,n}) = nk - \binom{k+1}{2} \, = \, \binom{n}{2} - \binom{n-k}{2}. $$  
The Stiefel ideal
$I_{\rm St} = I(\mathcal V_{k,n}) = \langle Z^TZ - {\rm Id}_k \rangle$ is a complete intersection.
When $k < n$, $\mathcal V_{k,n}$ is irreducible and $I_{\rm St}$ is prime. 
\end{theorem}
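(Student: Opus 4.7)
The plan is to establish the three assertions in order: the dimension equality together with $I_{\rm St}$ being a complete intersection, then irreducibility of $V_{k,n}$ when $k < n$, and finally primality of $I_{\rm St}$. The defining matrix $Z^TZ - {\rm Id}_k$ is symmetric, so $I_{\rm St}$ is generated by exactly $\binom{k+1}{2}$ polynomials in the $nk$ entries of $Z$. By Krull's height theorem, ${\rm codim}(V_{k,n}) \leq \binom{k+1}{2}$. For the matching lower bound I would compute the differential of the polynomial map $\phi: \mathbb{A}^{nk} \to {\rm Sym}_k$ given by $\phi(Z) = Z^TZ - {\rm Id}_k$: at a point $Z_0 \in V_{k,n}$ it sends $\dot Z \mapsto Z_0^T \dot Z + \dot Z^T Z_0$, and the choice $\dot Z = \tfrac{1}{2} Z_0 S$ recovers any symmetric $S$ using $Z_0^TZ_0 = {\rm Id}_k$. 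Hence the Jacobian of the generators has full row rank $\binom{k+1}{2}$ at every point of $V_{k,n}$, so ${\rm codim}(V_{k,n}) = \binom{k+1}{2}$ and $I_{\rm St}$ is a complete intersection of dimension $nk - \binom{k+1}{2}$. A short manipulation verifies $nk - \binom{k+1}{2} = \binom{n}{2} - \binom{n-k}{2}$.

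For irreducibility when $k < n$, I would realize $V_{k,n}$ as the image of the irreducible algebraic group ${\rm SO}(n)$ (over $\CC$) under the polynomial surjection $\pi$ that extracts the first $k$ columns. Surjectivity amounts to showing every $Z_0$ with $Z_0^TZ_0 = {\rm Id}_k$ extends to an element of ${\rm SO}(n)$. Over $\CC$, the column span of $Z_0$ is a $k$-dimensional subspace on which the standard symmetric bilinear form is non-degenerate, so $\CC^n$ splits as an orthogonal direct sum of this subspace and its $(n-k)$-dimensional non-degenerate complement, and a Gram--Schmidt-type argument produces an orthonormal basis of the complement. The strict inequality $k < n$ leaves at least one extending column free to negate, allowing us to force the determinant to be $+1$. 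Since ${\rm SO}(n)$ is irreducible as a connected algebraic group, its image $V_{k,n}$ is irreducible.

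Finally, to promote irreducibility of the variety to primality of the ideal, I would argue that $I_{\rm St}$ is radical. The Jacobian computation in the first step shows every point of $V_{k,n}$ is smooth; combined with the complete-intersection property, the local ring $(R/I_{\rm St})_{\mathfrak{m}}$ at every maximal ideal $\mathfrak{m} \supset I_{\rm St}$ is regular, hence a domain. This makes $I_{\rm St}$ reduced, and together with irreducibility of $V_{k,n}$ forces $I_{\rm St}$ to be prime. The main subtlety lies precisely here: the set-theoretic identity $V(I_{\rm St}) = V_{k,n}$ does not by itself imply $I_{\rm St}$ is prime, since a priori a complete intersection may fail to be reduced, and it is the Jacobian/smoothness argument that rules this out. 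A secondary delicate point is handling the surjection ${\rm SO}(n) \to V_{k,n}$ over $\CC$ rather than $\RR$: \emph{orthonormality} in the algebraic sense is defined via the symmetric bilinear form $Z^TZ$, not a Hermitian inner product, and completing a partial frame to a special orthogonal matrix requires the dimension-counting room that disappears when $k=n$.
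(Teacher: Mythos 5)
Your proof is correct, and its logical skeleton --- complete intersection from the generator count, irreducibility from ${\rm SO}(n)$, primality from smoothness plus radicality --- matches the paper's; the interesting differences are in how the two key inputs are obtained. For the dimension, the paper never touches the equations: it uses the finite $2^k$-to-$1$ map $V_{k,n} \to {\rm Fl}(1,\ldots,k;n)$ and reads the answer off Proposition~\ref{dim-formula}, whereas you get it from Krull's height theorem plus surjectivity of the differential of $Z \mapsto Z^TZ$, with the clean witness $\dot Z = \tfrac12 Z_0 S$. For smoothness and hence reducedness, the paper writes out the explicit Jacobian \eqref{eq:stiefel-jac}, checks full rank only at the single point $[e_1 \cdots e_k]$ (where the columns have disjoint supports), and propagates this by the transitive ${\rm SO}(n)$-action; your computation is uniform in the point, so it needs no homogeneity argument and simultaneously supplies the codimension lower bound. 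You also spell out what the paper leaves implicit --- why a complex $Z_0$ with $Z_0^TZ_0 = {\rm Id}_k$ extends to an element of ${\rm SO}(n,\CC)$ (non-degeneracy of the bilinear form on the column span, an orthonormal basis of the complement, and the sign fix available because $k<n$) --- which is exactly the content of the paper's assertion that ${\rm SO}(n)$ acts transitively on $V_{k,n}$ for $k<n$. What each approach buys: the paper's route is shorter because Proposition~\ref{dim-formula} and the explicit Jacobian \eqref{eq:stiefel-jac} are already on hand and reused later in the paper (e.g.\ in the proofs of Theorems~\ref{thm:eigen} and \ref{thm:eigen-pgr-degree}); yours is self-contained, avoids the manifold-quotient dimension count, and makes the passage from irreducible-and-smooth to prime fully explicit, including the correct observation that set-theoretic equality $V(I_{\rm St}) = V_{k,n}$ alone would not suffice.
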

\begin{proof}
 For the dimension, we observe that ${\rm Fl}(1,2,\ldots,k;n) \simeq \mathcal V_{k,n} \, / \, \prod_{i=1}^k {\rm O}(1)$, since each orthonormal frame $Z$ corresponds to a flag 
$W_1 \subset W_2 \subset \cdots \subset W_k$ with $\dim(W_i) = i$ up to the sign of each column of $Z$. This gives a finite map from $\mathcal V_{k,n}$ to ${\rm Fl}(1,2,\ldots, k;n)$ of degree $2^k$. Hence, the dimension of $\mathcal V_{k,n}$
is equal to the dimension of this flag manifold which is $\binom{n}{2} - \binom{n-k}{2} = nk - \binom{k+1}{2}$ by
Proposition \ref{dim-formula}.
The affine variety defined by $I_{\rm St}$ is precisely $\mathcal V_{k,n}$.
Since $I_{\rm St}$ has $\binom{k+1}{2}$ generators, it is a complete intersection. 

The special orthogonal group ${\rm SO}(n)$ acts on $\mathcal V_{k,n}$ transitively for $k<n$, and this shows that $\mathcal V_{k,n}$ is irreducible in this case. 
The variety $\mathcal V_{k,n}$ is the orbit of $[e_1 \, \cdots \, e_k]$ under this action where $e_j$ is the $j$th standard unit vector in $\CC^n$. We show that the affine scheme defined by $I_{\rm St}$ is reduced by showing that the above point is smooth. 
The Jacobian of $I_{\rm St}$ is 
\begin{align}\label{eq:stiefel-jac}
    \mathrm{Jac}(Z)^T \, = \, \left( \begin{array}{cccccccccccccc}
     2Z_1 &  &    &  &   & Z_2  & Z_3 & \cdots & Z_k &     &       &  & \cdots&  \\
          & 2Z_2 & & &   & Z_1  &     &        &     & Z_3 &\cdots &Z_k&  &   \\
          &     & 2Z_3&  &    & &Z_1 &        &     & Z_2 &      &     & \cdots&   \\ 
          &   &  & \ddots&     & & &   \ddots&     &     &\ddots &     &  &Z_k \\
      &    &     &  & 2Z_k&    & &           & Z_1 &     &       & Z_2 & \cdots & Z_{k-1}
    \end{array} \right),
  \end{align}
and $\mathrm{Jac}([e_1 \, \cdots \, e_k])^T$  has rank $\binom{k+1}{2}$ since all columns have different supports. This shows that $I_{\rm St}$ is a radical ideal in general and it is a prime ideal for $k<n$. 
\end{proof}
Now, we can associate to a flag $W_1 \subset W_2 \subset \cdots \subset W_r \subset V$ with $\dim(W_i) = k_i$ an orthonormal $k_r$-frame $Z$ together with the orthogonal groups ${\rm O}(k_i-k_{i-1})$ acting on the columns $Z_{k_{i-1} +1}, \ldots, Z_{k_i}$ by right multiplication. In other words, 
$${\rm Fl}(k_1, \ldots, k_r; n) \, \simeq \, 
\mathcal V_{k_r,n} \, / \, {\rm O}(k_1) \times {\rm O}(k_2-k_1) \times \cdots \times {\rm O}(k_r-k_{r-1}).$$
In particular, the Grassmannian ${\rm Gr}(k,n)$
is identified with $\mathcal V_{k,n} \, / \, {\rm O}(k)$. 
For both the flag and Grassmann manifolds, we  call their representation as a quotient of the Stiefel variety the representation in {\it Stiefel coordinates}. We will use the Stiefel coordinates for the Grassmannian to formulate the multi-eigenvector problem. 

The above initial introduction views  ${\rm Fl}({\bf k};n)$ and ${\rm Gr}(k,n)$ primarily as manifolds. As algebraic varieties, they have different embeddings. The classical embedding is in projective space. The case for the Grassmanian ${\rm Gr}(k,n)$ is well-known. A $k$-dimensional subspace $W$ of $V$ is identified with a $k\times n$ matrix $A$ of rank $k$ whose rows form a basis of $W$. This identification is up to the action of ${\rm GL}(k)$ by left multiplication to account for all possible bases of $W$. Then we consider the map
$${\rm Gr}(k,n) \longrightarrow \mathbb{P}^{\binom{n}{k} - 1} \quad \quad \quad A \mapsto \det(A_I)_{I \in \binom{[n]}{k}}$$
where $A_I$ is the $k \times k$ submatrix of $A$
whose columns are indexed by $I$. The image is an irreducible projective variety of dimension $k(n-k)$ defined by the quadratic {\it Pl\"ucker relations} \cite[Section 9.1]{fulton} in the coordinates $x_I$ of $\mathbb{P}^{\binom{n}{k}-1}$. In similar fashion, we embed ${\rm Fl}(k_1, \ldots, k_r;n)$ into $\mathbb{P}^{\binom{n}{k_1}-1} \times \cdots \times \PP^{\binom{n}{k_r}-1}$. This time we consider the map

\begin{align} \label{flag-pluecker}
{\rm Fl}({\bf k}; n) \to \mathbb{P}^{\binom{n}{k_1}-1} \times \cdots \times \PP^{\binom{n}{k_r}-1}, \quad
 A \mapsto \left(\det(A_{I_1})_{I_1 \in \binom{[n]}{k_1}},\, \cdots ,\, \det(A_{I_r})_{I_r \in \binom{[n]}{k_r}}\right)
\end{align}
where $A$ is a $k_r \times n$ matrix of rank $k_r$ and $A_{I_j}$ is the $k_j \times k_j$ submatrix of the first $k_j$ rows of $A$ with columns indexed by $I_j$. The image is an irreducible projective variety of dimension $\sum_{i=1}^r (k_i-k_{i-1})(n-k_i)$ also defined by quadratic relations in the coordinates $x_I$ where $I$ is a subset of $n$ of cardinality equal to one of $k_1, k_2, \ldots, k_r$. The equations below express the fact that
$W_s \subset W_t$ for every pair of subspaces in the flag whenever $s < t$.
\begin{theorem}[Proposition 9.1.1, \cite{fulton}] \label{thm:pluecker}
    The variety ${\rm Fl}(k_1, \ldots, k_r; n) \subseteq  \mathbb{P}^{\binom{n}{k_1}-1} \times \cdots \times \PP^{\binom{n}{k_r}-1}$ in Pl\"{u}cker coordinates is defined by the prime ideal generated by the quadrics
    \begin{align*}
        x_{i_1, \ldots, i_{k_s}} 
        x_{j_1, \ldots, j_{k_t}}
        - \sum x_{i'_1, \ldots, i'_{k_s}} x_{j'_1, \ldots, j'_{k_t}}
    \end{align*}
    for every pair of $1\leq s < t \leq r$ and 
    where the sum is over all $(i', j')$ obtained by exchanging the first $m$ of the $j$-subscripts with $m$ of the $i$-subscripts while preserving their order and given a permutation $\sigma \in S_{k_s}$, $x_{i_1, \ldots, i_{k_s}} = {\rm sgn}(\sigma) x_{\sigma(i_1), \ldots, \sigma(i_{k_s})}$.
\end{theorem}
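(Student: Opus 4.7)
The plan is to establish three facts in sequence: the listed quadrics vanish on the image of the Plücker map (\ref{flag-pluecker}); their common zero set equals that image; the ideal they generate is prime.

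For the first step I would take a $k_r \times n$ matrix $A$ of rank $k_r$ whose first $k_j$ rows span $W_j$. The classical Plücker relations restricted to a single block of size $k_s$ vanish by the standard Grassmannian argument. The mixed quadrics linking an $s$-block and a $t$-block come from a Laplace expansion: since the first $k_s$ rows are contained in the span of the first $k_t$ rows, the identity obtained by ``swapping'' the first $m$ of the $j$-subscripts (indexing a $k_t$-minor of the top $k_t$ rows) with $m$ of the $i$-subscripts (indexing a $k_s$-minor of the top $k_s$ rows) is precisely the stated quadric. This is a direct determinantal calculation.

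For the second step, let $p$ lie in the common zero set of the quadrics. The restriction of the quadrics to a single factor yields the ordinary Plücker relations, so $p$ projects to a genuine point of each ${\rm Gr}(k_s, n)$, producing subspaces $W_1, \ldots, W_r$. The mixed quadrics are then exactly the equations expressing $W_s \subset W_t$: they force every $(k_t+1)$-minor built from a basis of $W_s$ augmented by a basis of $W_t$ to vanish, which is the containment condition. Hence $p$ comes from a flag and $V(I) = {\rm Fl}({\bf k};n)$ set-theoretically.

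For primality I would use the fact that ${\rm Fl}({\bf k};n)$ is the orbit of the standard flag under the transitive action of ${\rm GL}(n)$, hence irreducible, so its homogeneous ideal is prime. The real content of the theorem is that the listed quadrics \emph{already} generate this prime ideal, rather than an ideal whose radical is correct. The standard route is through standard monomial theory: one exhibits an explicit multigraded basis of the flag-variety coordinate ring indexed by chains of increasing sequences, and shows that the corresponding monomials in the Plücker variables span the quotient by $I$ and are linearly independent modulo $I$. Equivalently, one constructs a Gröbner basis of $I$ whose initial ideal is squarefree, forcing $I$ to be radical.

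The main obstacle is this final step. The vanishing of the quadrics on the flag variety and the set-theoretic recovery of flags from a point of $V(I)$ are routine, but the algebraic statement that $I$ is radical requires either the full standard-monomial machinery or an explicit Gröbner basis calculation with a squarefree initial ideal --- both substantial but well-documented, and in practice one cites Fulton's treatment directly.
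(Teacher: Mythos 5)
The paper offers no proof of this statement at all: it is imported verbatim from Fulton (Proposition 9.1.1 of \cite{fulton}), so there is no internal argument to compare your sketch against. Your outline is the standard route and is essentially Fulton's: vanishing of the exchange quadrics via Laplace/shuffle identities, set-theoretic recovery of a flag from a common zero, and then standard monomial theory (a straightening law, equivalently a squarefree initial ideal) to show the quadrics generate the prime ideal --- and your concession that this last step is in practice cited is exactly what the paper does. One caveat in your second step: as the theorem is phrased, the quadrics are indexed by strict pairs $s < t$, so ``restricting to a single factor'' does not literally hand you the ordinary Pl\"ucker relations for each ${\rm Gr}(k_s,n)$; your argument needs the $s=t$ relations as well (they are part of Fulton's formulation), so you should either include them explicitly or note that they are implied in the intended reading of the statement.
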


We now turn to realizations of Grassmann and flag varieties in applied settings where they are represented by symmetric matrices. 
We give two such representations of flag varieties. 

The first one uses projection matrices. 
One can uniquely represent a $k$-dimensional subspace of $\RR^n$ with the orthogonal projection matrix $P$  onto that subspace. 
The symmetric matrix $P$ satisfies $P^2 =P$ and ${\rm trace}(P) = k$. 
These equations realize the Grassmannian as an affine variety in ${\mathbb A}^{\binom{n+1}{2}}$ which we call the 
 \emph{projection Grassmannian} ${\rm pGr}(k,n)$ \cite{DFRS,LY24a}. 

\begin{proposition}[Theorem 5.2, \cite{DFRS}] The projection Grassmannian ${\rm pGr}(k,n)$ is an irreducible variety defined by the prime ideal
$$ \langle P^2 - P \rangle + \langle {\rm trace}(P) -  k \rangle \,\, \subset  \,\, \CC[P].$$  
\end{proposition}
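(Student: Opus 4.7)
The plan is to prove the proposition in two stages: first establish irreducibility of the affine variety $V(I)$ cut out by $I = \langle P^2 - P\rangle + \langle \mathrm{trace}(P) - k\rangle$, and then invoke the Jacobian criterion at a specific point to upgrade irreducibility to primeness of $I$.

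For irreducibility, the natural tool is the polynomial map $\varphi : V_{k,n} \to V(I)$ defined by $Z \mapsto ZZ^T$. That $\varphi$ lands in $V(I)$ is immediate: $(ZZ^T)^2 = Z(Z^TZ)Z^T = ZZ^T$ since $Z^TZ = \mathrm{Id}_k$, and $\mathrm{trace}(ZZ^T) = \mathrm{trace}(Z^TZ) = k$. Surjectivity of $\varphi$ over $\CC$ requires showing that every complex symmetric idempotent $P$ of trace $k$ has the form $P = ZZ^T$ for some $Z \in V_{k,n}$. Since $P^2 = P$, the matrix $P$ is diagonalizable with eigenvalues in $\{0,1\}$ and the $1$-eigenspace $W$ has dimension $k$. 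The symmetry $P = P^T$ forces the $0$- and $1$-eigenspaces to be orthogonal under the standard symmetric bilinear form on $\CC^n$, so this form restricts nondegenerately to $W$. A diagonalization of this nondegenerate complex symmetric form yields a basis of $W$ whose matrix $Z \in \CC^{n \times k}$ satisfies $Z^TZ = \mathrm{Id}_k$ and $ZZ^T = P$. Together with the irreducibility of $V_{k,n}$ from Theorem~\ref{thm:stiefel} (and the trivial $k=n$ case), the image $V(I)$ is irreducible.

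For primeness, I would apply the Jacobian criterion at the concrete point $P_0 = \mathrm{diag}(1,\ldots,1,0,\ldots,0)$ with $k$ ones. Writing a symmetric perturbation in block form $H = \left(\begin{smallmatrix} A & B \\ B^T & C \end{smallmatrix}\right)$ with $A$ symmetric of size $k \times k$, the linearization of $P^2 - P$ at $P_0$ evaluates to $P_0 H + H P_0 - H = \left(\begin{smallmatrix} A & 0 \\ 0 & -C \end{smallmatrix}\right)$. Hence the tangent space to the scheme $\mathrm{Spec}(\CC[P]/I)$ at $P_0$ consists of matrices with $A = C = 0$ and $B$ free, of dimension $k(n-k)$. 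This matches $\dim V(I) = k(n-k)$, which follows from the Stiefel surjection modulo its $\binom{k}{2}$-dimensional $\mathrm{O}(k)$-orbits. Therefore $P_0$ is a smooth point of the scheme, and combined with the irreducibility of $V(I)$ the Jacobian criterion yields that $I$ is prime.

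The main obstacle is the surjectivity of $\varphi$ over $\CC$: unlike the real setting, where the spectral theorem makes this transparent, for complex symmetric idempotents one must verify carefully that the standard symmetric bilinear form restricts nondegenerately to the $1$-eigenspace so that complex orthonormal bases exist.
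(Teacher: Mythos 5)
Your irreducibility argument is sound and matches the standard route: the map $Z \mapsto ZZ^T$ from $V_{k,n}$, and your care with the complex case is exactly right (a symmetric idempotent has image and kernel orthogonal for the standard bilinear form, the form restricts nondegenerately to the image, so a complex ``orthonormal'' basis exists and surjectivity holds). The tangent-space computation at $P_0 = {\rm diag}(1,\ldots,1,0,\ldots,0)$ is also correct: the linearization $P_0H + HP_0 - H$ kills exactly the off-diagonal block, giving a $k(n-k)$-dimensional tangent space, equal to $\dim V(I)$.

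The gap is in the last inference: ``smooth point of the scheme at $P_0$ $+$ $V(I)$ irreducible $\Rightarrow$ $I$ prime'' is false in general. Smoothness at one point only makes the local ring at $P_0$ regular (hence a domain); it does not exclude embedded primary components of $I$ supported on proper subvarieties of $V(I)$ away from $P_0$. For instance, $I = \langle x^2, xy\rangle \subset \CC[x,y]$ has irreducible zero set $\{x=0\}$ and the scheme is smooth at $(0,1)$ (the Jacobian of the generators has rank $1$ there), yet $I$ is not even radical. Since your ideal is not a complete intersection, you cannot appeal to Cohen--Macaulayness to rule out embedded primes either. The missing ingredient---and the one the paper uses for the flag analogue, following \cite[Theorem 5.2]{DFRS}---is equivariance: conjugation by ${\rm O}(n,\CC)$, $P \mapsto QPQ^T$, preserves the generators $P^2 - P$ and ${\rm trace}(P) - k$, and by your own surjectivity argument it acts transitively on $V(I)$ (every point is $QE_kQ^T$, since a complex orthonormal $k$-frame extends to an orthonormal $n$-frame). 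Hence smoothness at $P_0$ propagates to every closed point, the scheme is regular, therefore reduced, and reduced together with irreducible support gives primeness. With that one sentence added, your proof is complete and is essentially the paper's argument.
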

We extend this definition to flag varieties. 
Indeed, if $P$ and $P'$ are projection matrices with ${\rm rank}(P) \leq {\rm rank}(P')$, then ${\rm im}(P) \subseteq {\rm im} (P')$ precisely when $P'P = P$.
We therefore define the \emph{projection flag variety} as 
\begin{align}\label{eq:projectionflag1}
    {\rm pFl}({\bf k}; n) = \{(P_1, \ldots, P_r) \in {\rm Sym}(\CC^n)^r \colon P_iP_j = P_j,\, {\rm trace}(P_i) = k_i \textrm{ for $1 \leq j \leq i\leq r$}\}. 
\end{align}
This definition may be found in \cite[Proposition 17]{YWL}.
The affine variety ${\rm pFl}({\bf k};n)$ is irreducible: given an orthonormal frame $Z$ in the Stiefel variety $\mathcal V_{k_r,n}$, we set $P_i = [Z_1 
 \cdots Z_{k_i}] [Z_1 \cdots Z_{k_i}]^T$. This yields projection matrices satisfying $P_iP_j = P_j$ for $j\leq i$
 and ${\rm trace}(P_i)=k_i$. Conversely, given such projection matrices, the column span of $P_i$ is the subspace $W_i$ in the flag. Hence, we can construct an orthonormal frame $Z \in \mathcal V_{k_r,n}$ such that $Z_1, \ldots, Z_{k_i}$ is an orthonormal basis of $W_i$, and now $P_i = [Z_1 
 \cdots Z_{k_i}] [Z_1 \cdots Z_{k_i}]^T$. Since ${\rm pFl}({\bf k};n)$ is given by this parametrization we obtain the result:
 \begin{proposition}
  The projection flag variety ${\rm pFl}({\bf k}; n)$ is an irreducible affine variety.  
 \end{proposition}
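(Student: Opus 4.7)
The plan is to realize $\mathrm{pFl}(\mathbf{k};n)$ as the image of the irreducible Stiefel variety $V_{k_r,n}$ under a polynomial map, which will give both the affine variety structure (noting that the generators in \eqref{eq:projectionflag1} are polynomial, so $\mathrm{pFl}(\mathbf{k};n)$ is already Zariski closed in $\mathrm{Sym}(\CC^n)^r \cong \mathbb{A}^{r\binom{n+1}{2}}$) and irreducibility (as the closure of the image of an irreducible variety).

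First I would define $\phi\colon V_{k_r,n}\to\mathrm{Sym}(\CC^n)^r$ by $\phi(Z) = (P_1,\dots,P_r)$ with $P_i = Z_{(i)}Z_{(i)}^T$ where $Z_{(i)} = [Z_1\,\cdots\,Z_{k_i}]$. Each $P_i$ is polynomial in the entries of $Z$. Using $Z^TZ = \mathrm{Id}_{k_r}$ one reads off the block identity $Z_{(i)}^TZ_{(i)} = \mathrm{Id}_{k_i}$, which yields $P_i^2 = P_i$, $\mathrm{trace}(P_i) = k_i$, and for $j \le i$ the inclusion of column blocks gives $P_iP_j = Z_{(i)}Z_{(i)}^TZ_{(j)}Z_{(j)}^T = Z_{(j)}Z_{(j)}^T = P_j$. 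Thus $\phi(V_{k_r,n}) \subseteq \mathrm{pFl}(\mathbf{k};n)$.

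Next I would prove surjectivity of $\phi$. Given $(P_1,\dots,P_r) \in \mathrm{pFl}(\mathbf{k};n)$, the relation $P_iP_j = P_j$ for $j\le i$ translates to $\mathrm{im}(P_j) \subseteq \mathrm{im}(P_i)$, yielding a flag of subspaces in $\CC^n$ of the prescribed dimensions. I would then construct $Z$ by building an orthonormal (with respect to $u^Tv$) basis of $\mathrm{im}(P_1)$, extending it to $\mathrm{im}(P_2)$, and so on. With such a $Z$, one checks that $\phi(Z) = (P_1,\ldots,P_r)$.

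The main obstacle is that we work over $\CC$, where the symmetric bilinear form $u^Tv$ has isotropic vectors, so the Gram--Schmidt step is not automatic. The key lemma I would establish first is that for any complex symmetric idempotent $P$ of trace $k$, the decomposition $\CC^n = \mathrm{im}(P) \oplus \ker(P)$ is orthogonal under $u^Tv$ (immediate from $v^Tw = v^TP^Tw = (Pv)^Tw = 0$ when $w \in \ker(P)$) and the restricted form on $\mathrm{im}(P)$ is non-degenerate; indeed, if $v = Pu$ is orthogonal to all of $\mathrm{im}(P)$, then it is also orthogonal to $\ker(P)$, hence to all of $\CC^n$, forcing $v = 0$. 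Since non-degenerate complex symmetric forms are isometric to the standard one, $\mathrm{im}(P)$ admits a complex orthonormal basis, which can be extended to larger images in the flag by applying the same lemma to the orthogonal complement of $\mathrm{im}(P_{j-1})$ in $\mathrm{im}(P_j)$. Finally, when $k_r < n$ Theorem \ref{thm:stiefel} gives irreducibility of $V_{k_r,n}$, and hence of $\mathrm{pFl}(\mathbf{k};n) = \phi(V_{k_r,n})$; the boundary case $k_r = n$ is handled by noting that $P_r = \mathrm{Id}_n$ is forced, reducing to $\mathrm{pFl}(k_1,\dots,k_{r-1};n)$.
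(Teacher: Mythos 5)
Your proof is correct and follows essentially the same route as the paper: parametrize ${\rm pFl}({\bf k};n)$ by the irreducible Stiefel variety $V_{k_r,n}$ via $Z \mapsto (P_1,\ldots,P_r)$ with $P_i = [Z_1 \cdots Z_{k_i}][Z_1\cdots Z_{k_i}]^T$, check the image satisfies the defining equations, prove surjectivity by building an orthonormal frame adapted to the flag of column spans, and conclude irreducibility as the image of an irreducible variety. The only difference is that you explicitly justify two points the paper leaves implicit — the non-degeneracy of the bilinear form $u^Tv$ on ${\rm im}(P)$ for a complex symmetric idempotent $P$ (needed for Gram--Schmidt over $\CC$) and the boundary case $k_r = n$ where $V_{n,n}={\rm O}(n)$ is reducible — both of which are welcome additions rather than deviations.
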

The equations in (\ref{eq:projectionflag1}) define the projection flag variety set theoretically.
We now prove that they generate the prime ideal $I({\rm pFl}({\bf k}; n))$. We note that in \eqref{eq:projectionflag1} it suffices to retain the relations $P_i^2 = P_i$ for $i = 1, \ldots, r$ and  $P_iP_{i-1}=P_{i-1}$ for $i=2,\ldots, r$. 

\begin{theorem}\label{thm:projectionideal}
    Let ${\rm Fl}(k_1, \ldots, k_r; n)$ be a flag variety and let $P_1, \ldots, P_r$ be $n \times n$ symmetric matrices of unknowns.
    Then ${\rm pFl}({\bf k}; n)$ is smooth and has prime ideal
    \begin{align*}
          \langle P_{i}P_{i-1} - P_{i-1} : 2 \leq i \leq r \rangle + 
        \langle P_i^2 - P_i, {\rm trace}(P_i) - k_i : 1 \leq i \leq r\rangle \subset \CC[P_1, \ldots, P_r].
    \end{align*}
\end{theorem}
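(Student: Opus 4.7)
The plan is to use the already-established irreducibility of ${\rm pFl}({\bf k};n)$ together with a Jacobian rank computation. Let $I$ denote the ideal in the statement; by the preceding paragraphs $V(I) = {\rm pFl}({\bf k};n)$ set-theoretically, so $\sqrt{I}$ is prime and it remains only to prove that $I$ is radical. By the Jacobian criterion this reduces to showing that the scheme $V(I)$ is smooth at every closed point, which will simultaneously establish the smoothness claim for ${\rm pFl}({\bf k};n)$.

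The reduction to a single base point is by a group action. The orthogonal group ${\rm O}(n,\CC)$ acts on ${\rm Sym}(\CC^n)^r$ by simultaneous conjugation, $Q\cdot(P_1,\ldots,P_r) = (QP_1Q^T,\ldots,QP_rQ^T)$, and this action preserves $I$ since $(QPQ^T)^2 - QPQ^T = Q(P^2-P)Q^T$, ${\rm trace}(QPQ^T) = {\rm trace}(P)$, and the relations $P_iP_{i-1} = P_{i-1}$ transform by conjugation as well. The action is transitive on $V(I)$, as is evident from the Stiefel parametrization used in the proof of the preceding proposition, so it suffices to verify smoothness at the convenient base point $P_i^0 := {\rm diag}(I_{k_i},0_{n-k_i})$.

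For the tangent space calculation I would partition $\{1,\ldots,n\}$ into consecutive blocks of sizes $d_j := k_j - k_{j-1}$ (with $k_0 := 0$) for $j = 1,\ldots,r$ and $d_{r+1} := n - k_r$, and write each symmetric perturbation $\dot P_i$ in $(r+1)\times(r+1)$ block form. Writing $E_i := P_i^0$ and linearizing $P_i^2 = P_i$ gives $E_i\dot P_i + \dot P_i E_i = \dot P_i$, which forces every block $(\dot P_i)_{\alpha\beta}$ with both $\alpha,\beta \le i$ or both $\alpha,\beta > i$ to vanish; only the off-diagonal blocks survive, contributing $k_i(n-k_i)$ parameters per $i$. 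The trace relations linearize to ${\rm trace}(\dot P_i) = 0$, which is automatic once the diagonal blocks vanish. Linearizing $P_iP_{i-1} = P_{i-1}$ yields $\dot P_i E_{i-1} + E_i \dot P_{i-1} = \dot P_{i-1}$, and a block-by-block check shows that the only non-vacuous constraints are the identifications $(\dot P_i)_{\alpha\beta} = (\dot P_{i-1})_{\alpha\beta}$ for $\alpha$ in a block indexed $> i$ and $\beta$ in a block indexed $\le i-1$, imposing $k_{i-1}(n-k_i)$ conditions for each $i = 2,\ldots,r$.

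Telescoping gives
\[
\dim T_{P^0} V(I) \;=\; \sum_{i=1}^r k_i(n-k_i) \,-\, \sum_{i=2}^r k_{i-1}(n-k_i) \;=\; \sum_{i=1}^r (k_i-k_{i-1})(n-k_i),
\]
which by Proposition \ref{dim-formula} is exactly $\dim {\rm pFl}({\bf k};n)$. This proves smoothness at $P^0$, hence everywhere by the group action, so $I$ is radical and by irreducibility prime. The main obstacle is the block bookkeeping in the tangent-space step: one must confirm that the constraints coming from different values of $i$ are jointly independent. The cleanest way to do this is to organize the count by off-diagonal block $(j,k)$ with $j<k$: this block appears freely in each of $\dot P_j,\ldots,\dot P_{k-1}$, and the cross relations impose exactly $k-j-1$ consecutive identifications among these $k-j$ copies, leaving one independent copy per pair and matching $\sum_{j<k} d_jd_k = \sum_i(k_i-k_{i-1})(n-k_i)$.
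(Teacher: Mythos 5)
Your proposal is correct and follows essentially the same route as the paper: both use set-theoretic equality plus irreducibility to reduce primeness to scheme-smoothness, reduce to the single base point $(E_{k_1},\ldots,E_{k_r})$ via the transitive ${\rm O}(n)$-action, and then apply the Jacobian criterion there. The only difference is cosmetic/dual: the paper exhibits $N$ rows of the Jacobian with disjoint supports to bound its rank below by the codimension, while you solve the linearized equations in block form to show the kernel has dimension exactly $\sum_{i=1}^r (k_i-k_{i-1})(n-k_i)$ — an equivalent computation, and your block bookkeeping (one surviving copy of each off-diagonal block per pair, after the consecutive identifications) checks out.
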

\begin{proof}
    We follow the proof of the Grassmannian case in \cite[Theorem 5.2]{DFRS}.
    Since the variety is irreducible and the generators of the ideal cut out the variety set theoretically, it suffices to prove that the scheme of the ideal is smooth. 
    As in the Grassmannian case, there is a transitive ${\rm O}(n)$ action on tuples $(P_1, \ldots, P_r)$ and hence we need only check smoothness at the point $E_{\bf k} = (E_{k_1}, \ldots, E_{k_r})$ where $E_i$ is the $n \times n$ matrix with $(E_i)_{\ell j} = 1$ if $\ell = j \leq i$ and zero otherwise.
    The codimension of ${\rm pFl}({\bf k}; n)$ is $N = \sum_{i=1}^r \binom{n+1}{2} - n(n-k_i) + \sum_{i=2}^r k_{i-1}(n - k_i)$.
    We prove that the rank of the Jacobian is at least $N$ by describing $N$ linearly independent  rows of the Jacobian evaluated at $E_{\bf k}$. 

    Fix $i \in \{1, \ldots, r\}$ 
    and choose $\ell, j \in \{1, \ldots, k_i\}$ or $\ell, j \in  \{k_i+1,\ldots, n\}$. 
    Then
    \begin{align*}
        \left. \frac{\partial (P_{i}^2 - P_{i})_{\ell j}}{\partial (P_{i'}
)_{\ell'j'}}\right|_{E_{\bf k}} = 
        \begin{cases}
            \pm 1 &\quad \textrm{$i' = i, \ell' = \ell, j' = j$},\\
            0 & \quad \textrm{else}\\
        \end{cases}
    \end{align*}
    where the sign is positive if $\ell, j \leq k_i$ and negative if $k_i < \ell, j$.
    There are $\binom{n+1}{2} - k_i(n - k_i)$ choices for such pairs $\{\ell, j\}$.

    Next fix $i \in \{2, \ldots, r\}$, let $j \in \{1, \ldots, k_{i-1}\}$, and let $\ell \in \{k_{i}+1, \ldots, n\}$.
    Then 
    \begin{align*}
        \left. \frac{\partial (P_{i}P_{i-1} - P_{i-1})_{\ell j}}{\partial (P_{i'}
)_{\ell'j'}}\right|_{E_{\bf k}} = 
        \begin{cases}
            1 &\quad \text{if $i' = i, \ell' = \ell, j' = j$},\\
            -1 &\quad \text{if $i' = i-1, \ell' = \ell, j' = j$},\\
            0 &\quad \text{else.}
        \end{cases}
    \end{align*}
    There are $k_{i-1}(n - k_i)$ choices for pairs $\{\ell, j\}$.
    Ranging over all choices of $i$ yields $N$ rows of the Jacobian with disjoint supports. 
    Thus the Jacobian has the desired rank.
\end{proof}
The second representation of Grassmann and flag varieties using symmetric matrices is known as the \emph{isospectral model} \cite{LY24b}. 
This representation encodes a flag with a single matrix.
To construct this matrix, we represent a flag in ${\rm Fl}({\bf k}; n)$ by an equivalence class of orthogonal matrices $Q \in {\rm O}(n)$ modulo an ${\rm O}(k_1) \times \cdots \times {\rm O}(k_r - k_{r-1}) \times {\rm O}(n-k_r)$-action. 
We then define a diagonal matrix $X_0 = {\rm diag}(c_1, \ldots, c_n)$ such that $c_1 = \cdots = c_{k_1}$, $c_{k_1+1} = \cdots = c_{k_2}$, etc. 
The symmetric matrix $X = Q^TX_0Q$ does not depend on our choice of representative $Q$ and therefore uniquely encodes the flag. 
So the flag variety ${\rm Fl}({\bf k}; n)$ is embedded as the variety of diagonalizable complex symmetric matrices with spectrum ${\bf c} = (c_1, \ldots, c_n)$. 
\begin{definition}
    Suppose $c_{{k_i}+1}= \cdots = c_{k_{i+1}}$ for $i=0,\ldots,r$ 
    where $k_0 =0$ and $k_{r+1} =n$ and let $X_0 = {\rm diag}({\bf c})$.
    The \emph{isospectral model} for the flag variety ${\rm Fl}_{\bf c}({\bf k}; n)$ is the image of the map
    \begin{align}\label{eq:iso-param}
        {\rm SO}(n) \to \CC^{n \times n}, \quad\quad Q \mapsto QX_0Q^T.
    \end{align}
\end{definition}
From this definition, we see that the isospectral model is irreducible of the correct dimension. 
A complex symmetric $X$ is diagonalizable precisely when  its minimal polynomial has no repeated factors. 
By \cite[Theorem XI.4]{gantmacher}, diagonalizable implies orthogonally diagonalizable,
so the condition that the minimal polynomial of $X$ has no repeated factors suffices to prove that $X$ is in ${\rm Fl}_{\bf c}({\bf k}; n)$ for some choice of ${\bf c}$. 
Supposing $X$ satisfies the polynomial $\prod_{j=1}^r (X - c_{k_j}{\rm Id}_n)$, we know the eigenvalues of $X$ are  $c_{k_j}$, but we do not know the multiplicities. 
For this task, we need ${\rm trace}(X) = \sum_{j=1}^nc_j$. 
To identify the multiplicities for non-generic ${\bf c}$, one must consider the traces of powers of $X$, as well. 
However, it is shown in \cite{LY24b} that for generic ${\bf c}$, ${\rm trace}(X)$ gives enough information to determine the multiplicities of the eigenvalues.
Thus a matrix $X$ is in ${\rm Fl}_{\bf c}({\bf k}; n)$ precisely when $\prod_{j=1}^r (X - c_{k_j}{\rm Id}_n) = 0$ and ${\rm trace}(X) = \sum_{j=1}^nc_j$. 
This discussion shows that these equations cut out the isospectral model over $\CC$. 
We now prove that they generate a prime ideal.

\begin{theorem}\label{thm:iso_ideal}
    Let ${\rm Fl}({\bf k};n)$ be a flag variety and let $X$ be a symmetric matrix of unknowns. 
    Given a generic choice of $c_1, \ldots, c_n$ satisfying $c_{k_j+1} = \cdots =c_{k_{j+1}}$ for $j=0, \ldots, r$, the variety ${\rm Fl}_{\bf c}({\bf k}; n)$ is smooth and its prime ideal is
    \begin{align*}
        I({\rm Fl}_{\bf c}({\bf k};n)) = 
        \langle \prod_{j = 1}^r (X - c_{k_j}{\rm Id}_n), \,\,
        {\rm trace}(X) - \sum_{j = 1}^n c_j \rangle.
    \end{align*}
\end{theorem}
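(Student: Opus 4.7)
The plan is to mirror the proof of Theorem~\ref{thm:projectionideal}. Irreducibility of ${\rm Fl}_{\bf c}({\bf k};n)$ is already provided by the parametrization (\ref{eq:iso-param}), and the discussion preceding the theorem establishes that the given generators cut out ${\rm Fl}_{\bf c}({\bf k};n)$ set-theoretically over $\CC$ for generic ${\bf c}$. It therefore suffices to show that the scheme defined by these equations is smooth at a single point of the variety; combined with irreducibility, this forces the ideal to be prime.

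The conjugation action $Q \cdot X = QXQ^T$ of ${\rm SO}(n)$ on ${\rm Sym}(\CC^n)$ preserves both generators --- the trace is invariant and $\prod_j (QXQ^T - c_{k_j}{\rm Id}_n) = Q\bigl(\prod_j (X - c_{k_j}{\rm Id}_n)\bigr)Q^T$ --- and it is transitive on ${\rm Fl}_{\bf c}({\bf k};n)$ by (\ref{eq:iso-param}). So it is enough to verify the Jacobian criterion at the single point $X_0 = {\rm diag}({\bf c})$, the image of the identity.

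The heart of the argument is the derivative of $p(X) := \prod_j (X - c_{k_j}{\rm Id}_n)$ at $X_0$. Writing $L_j(t) = \prod_{i<j}(t - c_{k_i})$ and $R_j(t) = \prod_{i>j}(t - c_{k_i})$, the product rule gives
$$\partial_{X_{\ell m}} p(X)\bigr|_{X_0} \;=\; \sum_j L_j(X_0)\, E_{\ell m}\, R_j(X_0),$$
where $E_{\ell m}$ has ones at positions $(\ell,m)$ and $(m,\ell)$. Because $X_0$ is diagonal, the only possibly nonzero entries of this matrix sit at positions $(\ell,m)$ and $(m,\ell)$, each equal to $\sum_j L_j(c_\ell) R_j(c_m)$. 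The divided-difference identity $\sum_j L_j(x) R_j(y) = (p(x)-p(y))/(x-y)$ then shows that this sum vanishes whenever $c_\ell \neq c_m$ (since every eigenvalue of $X_0$ is a root of $p$) and equals $p'(c_{k_{j_0}}) \neq 0$ (for generic ${\bf c}$) when $\ell, m$ lie in a common block with eigenvalue $c_{k_{j_0}}$.

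Consequently, for each symmetric pair $(\ell,m)$ with $\ell \leq m$ whose positions lie in a common eigenvalue block of size $d_i$, the scalar equation $p(X)_{\ell m} = 0$ contributes a Jacobian row with a unique nonzero entry in the column indexed by $X_{\ell m}$. These $\sum_i \binom{d_i+1}{2}$ rows have pairwise disjoint supports, so they are linearly independent, and the count matches the codimension of ${\rm Fl}_{\bf c}({\bf k};n)$ in ${\rm Sym}(\CC^n)$; smoothness at $X_0$, and hence on the whole orbit, follows. The main obstacle is the derivative calculation, specifically recognizing the divided-difference identity that makes the cross-block derivatives of $p(X)$ vanish at $X_0$ so that the within-block equations alone saturate the codimension bound. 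The trace generator plays no role in the Jacobian rank count but is essential set-theoretically to pick out the single multiplicity component of $V(p)$ containing $X_0$.
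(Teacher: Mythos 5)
Your proposal is correct and follows the paper's proof in all essentials: irreducibility and the set-theoretic description are taken from the preceding discussion, the transitive conjugation action reduces the Jacobian criterion to the single point $X_0$ (which, as you note, yields smoothness along the whole orbit and hence primeness), and the rank count saturates the codimension $n+\binom{k_1}{2}+\cdots+\binom{n-k_r}{2}$ exactly as in the paper. The only difference is how the Jacobian entries are evaluated: you differentiate the factored product and invoke the divided-difference identity $\sum_j L_j(x)R_j(y)=(p(x)-p(y))/(x-y)$, whereas the paper reaches the same values $\sum_{p=0}^{m-1}c_i^{p}c_j^{m-1-p}$, hence $f'(c_i)$ within a block and $0$ across blocks, by induction on powers of $X$; the two computations are equivalent.
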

\begin{proof}
    We proceed as in the proofs of Theorem \ref{thm:stiefel} and Theorem \ref{thm:projectionideal} and show that the affine scheme defined by the right hand side ideal is smooth at every closed point. This proves smoothness of the variety and primeness of the ideal. 
    
    The orthogonal group ${\rm O}(n)$ acts transitively on closed points by conjugation. 
    Thus every closed point is conjugate to the matrix $X_0 = {\rm diag}(c_1, \ldots, c_n)$.
    Hence, we check smoothness at $X_0$ using the Jacobian criterion. 
    It suffices to check that the Jacobian $J({\bf c})$ evaluted at $X_0$
    has rank at least $n + \binom{k_1}{2} + \cdots + \binom{n-k_r}{2}$. 
    The rows and columns of $J({\bf c})$ are both indexed by entries $(i,j)$ of the matrix $X$. 
    We will show that the entry $J({\bf c})_{(i,j), (i', j')} \neq 0$ precisely when $i = i', j = j'$, and $c_i = c_j$.
    With the correct ordering of variables, $J({\bf c})$ is diagonal with diagonal entries of the form $J({\bf c})_{(i,j), (i,j)}$.
    It is then clear from counting that this matrix has the correct rank. 

    We prove off-diagonal entries of $J({\bf c})$ are $0$. 
    If $\{i,j\} \cap \{i',j'\} = \emptyset$, by the product rule 
    \begin{align*}
       \left. \frac{\partial(X^m)_{ij}}{\partial x_{i'j'}}  \right|_{X = X_0} = 
       \left. \sum_{\ell=1}^n x_{i\ell} \frac{\partial (X^{m-1})_{\ell j}}{\partial x_{i'j'}} \right |_{X = X_0}
       =
       \left. c_i \frac{\partial (X^{m-1})_{i j}}{\partial x_{i'j'}} \right |_{X = X_0} = 0
    \end{align*}
    where the second equality comes from evaluating at $X = X_0$ and the last equality is induction on $m$.
    Since derivatives of all powers of $X$ vanish,  $J({\bf c})_{(i,j), (i',j')} = 0$ if $\{i,j\} \cap \{i',j'\} = \emptyset$.

    Next suppose $i = i'$ and $j \neq j'$. Again using the product rule, 
    \begin{align*}
       \left. \frac{\partial(X^m)_{ij}}{\partial x_{ij'}}  \right|_{X = X_0} = 
       \left. (X^{m-1})_{j'j} +  \sum_{\ell=1}^n x_{i\ell} \frac{\partial (X^{m-1})_{\ell j}}{\partial x_{ij'}} \right |_{X = X_0}=
       \left.   c_{i} \frac{\partial (X^{m-1})_{i j}}{\partial x_{ij'}} \right |_{X = X_0} = 0
    \end{align*}
    where the last equality is by induction on $m$.
    As above, we have $J({\bf c})_{(i,j),(i,j')}=0$ for $j \neq j'$. 
    This concludes the proof that $J({\bf c})$ is a diagonal matrix.

    Finally, we prove that the diagonal entries $J({\bf c})_{(i,j), (i,j)}$ are nonzero if  $c_i = c_j$. We compute
    \begin{align*}
        \left. \frac{\partial(X^m)_{ij}}{\partial x_{ij}}  \right|_{X = X_0} = 
       \left. (X^{m-1})_{jj} +  \sum_{\ell=1}^n x_{i\ell} \frac{\partial (X^{m-1})_{\ell j}}{\partial x_{ij}} \right |_{X = X_0}=
       \left .c_j^{m-1} + c_i \frac{\partial (X^{m-1})_{i j}}{\partial x_{ij}}\right |_{X = X_0}.
    \end{align*}    
    By induction we see that
  $        \left. \frac{\partial(X^m)_{ij}}{\partial x_{ij}}  \right|_{X = X_0} = \sum_{p=0}^{m-1} c_i^p c_j^{m-1- p}.$
  If $c_i = c_j$, $\frac{\partial(X^m)_{ij}}{\partial x_{ij}} = mc_i^m$ and so $J({\bf c})_{(i,j),(i,j)}  = f'(c_i)$ where $f(z) = (z - c_{k_1}) \cdots (z - c_{k_r})$ is the minimal polynomial of $X$.  
     Since $c_i$ is a root of $f$ and $f$ has no double roots $J({\bf c})_{(i,j),(i,j)} \neq 0$.
    
    Since there are at least $n + \binom{k_1}{2} + \cdots +\binom{n-k_r}{2}$ nonzero entries on the diagonal of $J({\bf c})$ and all off-diagonal entries are zero, we conclude that $J({\bf c})$ has rank at least $n + \binom{k_1}{2} + \cdots +\binom{n-k_r}{2}$. 
\end{proof}
\begin{example} 
    We show the different ways of representing the flag variety ${\rm Fl}(1, 2; 3)$. 
    In Stiefel coordinates, this flag variety is realized as $\mathcal V_{2,3} / {\rm O}(1) \times {\rm O}(1)$ via a $3 \times 2$ matrix 
    $$Z = \begin{pmatrix} z_{11} & z_{12} \\ z_{21} & z_{22} \\ z_{31} & z_{32} \end{pmatrix}$$ with orthonormal columns. We identify such matrices up to  sign of each column.

    To obtain the Pl\"{u}cker coordinates for the flag represented by $Z$, we take minors. 
    This yields six Pl\"ucker coordinates that realize this complete flag variety in $\mathbb{P}^2 \times \mathbb{P}^2$:
    \begin{align*}
    \begin{matrix}
        x_1 = z_{11}, & x_2 = z_{21}, & x_3 = z_{31},\\
        x_{12} = z_{11}z_{22} - z_{12}z_{21},
        & 
        x_{13} = z_{11}z_{32} - z_{12}z_{31},
        &
        x_{23} = z_{21}z_{32} - z_{22}z_{31}.
        \end{matrix}
    \end{align*}
    By Theorem \ref{thm:pluecker} these coordinates satisfy a single relation $x_1 x_{23} - x_2 x_{13} + x_3 x_{12} = 0$.

    To represent the flag in projection coordinates, let $P_1$ be the unique orthogonal projection matrix onto the space spanned by the first column of $Z$ and $P_2$ be the unique orthogonal matrix onto the column space of $Z$:
    \begin{align*}
        P_1 &= 
        \begin{pmatrix}
            z_{11} \\ z_{21} \\ z_{31}
        \end{pmatrix}
        \begin{pmatrix}
            z_{11} &z_{21} & z_{31}
        \end{pmatrix}
        , \quad \quad 
        P_2 = ZZ^T.  
    \end{align*}
    One checks that these matrices satisfy the relations
    \begin{align*}
        P_1^2 = P_1 &&  P_2^2 = P_2  && P_2P_1 = P_1\\
        {\rm trace}(P_1) = 1 &&  {\rm trace}(P_2) = 2.
    \end{align*}
    Finally, we can also represent this flag with a single matrix via isospectral coordinates. 
    We append a column to $Z$ to form a $3 \times 3$ orthogonal matrix $\tilde{Z}$ and fix the generic vector ${\bf c} = (c_1,c_2,c_3)$.
    The isospectral coordinates for $Z$ with respect to $\bf c$ are given by the matrix
    \begin{align*}
        S = \tilde{Z} \, {\rm diag}(c_1, c_2, c_3)\, \tilde{Z}^T
    \end{align*}
    whose eigenvalues are  $c_1, c_2, c_3$ and whose eigenvectors are the columns of $\tilde{Z}$.
    This matrix satisfies its minimal polynomial and has the correct trace: 
    \begin{align*}
        (S - c_1{\rm Id}_3)(S - c_2{\rm Id}_3)(S - c_3{\rm Id}_3) = 0 && 
        {\rm trace}(S) = c_1 + c_2 + c_3.
    \end{align*}
\end{example}
From this example, we see that it is not difficult to write the Pl\"{u}cker, projection, and isospectral coordinates once we have Stiefel coordinates for a flag. 
The following proposition summarizes what is known about how to write different coordinates for the flag variety in terms of other coordinates.

\begin{proposition}
  The relationships between the different lives of the flag  variety ${\rm Fl}(k_1, k_2, \ldots, k_r;n)$ are captured in Figure~\ref{fig:lives}. In the figure, $Z$ is an $n \times k_r$ matrix in $\mathcal V_{k_r,n}$, $\tilde{Z}$ is an $n \times n$ orthogonal matrix whose first $k_r$ columns comprise $Z$, and $Z_{I,J}$ denotes the submatrix of $Z$ with rows indexed by the set  $I$ and columns indexed by the set $J$. The variables $x_{i_1, \dots, i_{k_s}}$ indicate Pl\"{u}cker coordinates and $X_i$ denotes the cocircuit matrix for ${\rm Gr}(k_i, n)$ as in \cite[Corollary 2.5]{DFRS}. The matrix $P_i$ is a projection matrix in the tuple of 
  $(P_1, \ldots, P_r)$. Finally, $S$
  is an $n \times n$ symmetric matrix with eigenvalues ${\bf c} = (c_1, \ldots, c_n)$.
\end{proposition}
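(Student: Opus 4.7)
The plan is to verify each arrow in Figure~\ref{fig:lives} separately by direct calculation, relying on the parametrizations already established in this section. Since Theorems \ref{thm:pluecker}, \ref{thm:projectionideal}, and \ref{thm:iso_ideal} (together with the discussion preceding them) already show that the four models parametrize the same flag variety, the only task here is to confirm that the specific formulas appearing in the figure implement the intertwining maps correctly.

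First I would dispatch the three arrows out of the Stiefel model, which are the most concrete. The Stiefel-to-Pl\"ucker map sends $Z$ to the tuple of maximal minors $\det(Z_{[k_s], J})$ for $J \in \binom{[n]}{k_s}$, which is exactly the map \eqref{flag-pluecker} restricted to the $k_r \times n$ matrix $Z^T$. The Stiefel-to-projection arrow sends $Z$ to $P_i = [Z_1\cdots Z_{k_i}][Z_1 \cdots Z_{k_i}]^T$, and the computation that $P_i^2 = P_i$, $P_iP_{i-1}=P_{i-1}$, and ${\rm trace}(P_i) = k_i$ using $Z^TZ = {\rm Id}_{k_r}$ is immediate; this is the parametrization used in the paragraph before Theorem~\ref{thm:projectionideal}. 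The Stiefel-to-isospectral arrow $\tilde Z \mapsto \tilde Z\, {\rm diag}({\bf c}) \, \tilde Z^T$ is by definition the parametrization \eqref{eq:iso-param}, so nothing is to prove.

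Next I would verify the arrows between the symmetric-matrix and Pl\"ucker models. For projection-to-Pl\"ucker, given $P_i$ one writes $P_i = U_iU_i^T$ for any orthonormal basis $U_i$ of ${\rm im}(P_i)$ and reads off the Pl\"ucker vector as the maximal minors of $U_i^T$; changing the basis of ${\rm im}(P_i)$ only scales this vector, so the map is well-defined on $\PP^{\binom{n}{k_i}-1}$. For Pl\"ucker-to-projection, one uses the cocircuit matrices: the formulas $P_i = X_i / {\rm trace}(X_i)$ coming from \cite[Corollary~2.5]{DFRS} express the projection matrix as a rational function of the Pl\"ucker coordinates of ${\rm Gr}(k_i, n)$. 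For the isospectral-to-projection arrow, Lagrange interpolation at the distinct values $c_{k_1}, \ldots, c_{k_r}$ produces polynomials $L_j(t)$ with $L_j(c_{k_\ell}) = \delta_{j\ell}$; then $P_i = \sum_{j \leq i} L_j(S)$ extracts the spectral projector onto the sum of the first $i$ eigenspaces, and the projection-to-isospectral arrow is its inverse $S = \sum_{j=1}^{r+1} c_{k_j}(P_j - P_{j-1})$ (with $P_0 = 0$, $P_{r+1} = {\rm Id}_n$), which manifestly has the prescribed spectrum and satisfies the defining equations of Theorem~\ref{thm:iso_ideal}.

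The main obstacle, such as it is, is bookkeeping rather than mathematics: going from any of the three intrinsic models back to Stiefel coordinates is only defined up to the action of ${\rm O}(k_1) \times {\rm O}(k_2-k_1) \times \cdots \times {\rm O}(n-k_r)$, so the corresponding arrows in Figure~\ref{fig:lives} must be interpreted as lifts obtained by choosing orthonormal bases for each $W_i/W_{i-1}$; once such a choice is fixed the formulas compose to the identity on ${\rm Fl}({\bf k};n)$. The only genuinely nontrivial ingredient is the cocircuit formula relating Pl\"ucker coordinates to projection matrices, which I would simply import from \cite[Corollary~2.5]{DFRS} rather than rederive.
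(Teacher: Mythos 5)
Your proposal is correct and follows essentially the same route as the paper: verify each edge of Figure~\ref{fig:lives} by direct computation, import the Pl\"ucker-to-projection map from \cite[Corollary 2.5]{DFRS}, and treat the arrows returning to Stiefel coordinates as choices of orthonormal bases obtained from a matrix factorization (the paper does this via spectral decompositions of $P_r = \tilde Z E_{k_r}\tilde Z^T$ and of $S = \tilde Z\,{\rm diag}({\bf c})\,\tilde Z^T$). Two small points of comparison. First, the cocircuit formula you quote, $P_i = X_i/{\rm trace}(X_i)$, is not the formula in the figure; the correct statement is $P_i = \frac{k_i}{{\rm trace}(X_i^2)}\,X_i^2$, and since $X_i$ is in general not itself proportional to a projection matrix, you do need the squared version when you import it from \cite{DFRS} --- this does not damage your argument because you defer to that reference, but the proposition being proved is precisely the correctness of the figure's formulas, so the slip is worth fixing. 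Second, for the projection-to-isospectral arrow you verify the telescoping identity $S=\sum_{j} c_{k_j}(P_j - P_{j-1})$ directly (checking the spectrum via spectral projectors, with Lagrange interpolation giving the inverse direction), whereas the paper obtains the same formula by writing ${\rm diag}({\bf c})$ as a linear combination of the matrices $E_{k_i}$ and conjugating by $\tilde Z$, i.e., by composing through Stiefel coordinates; both arguments are valid and yield the identical expression, yours being marginally more intrinsic and also covering the isospectral-to-projection direction, which the figure does not require.
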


\begin{proof}
The map from Stiefel to Pl\"ucker coordinates  is as in (\ref{flag-pluecker}).
The map from Pl\"{u}cker to projection coordinates is explained for the Grassmannian in \cite[Corollary 2.5]{DFRS} and extends naturally to the flag variety. 

To move from Stiefel coordinates to projection coordinates, one takes projection matrices onto the spans of the appropriate submatrices of $Z$. 
To go the other way, one takes a spectral decomposition of $P_r = \tilde{Z} E_{k_r} \tilde{Z}^T$ where $E_{i}$ is  the $n \times n$ matrix whose $(1,1), \ldots, (i,i)$ entries are 1 and all other entries are zero. Set $Z$ to be the first $k_r$ columns of $ {\tilde Z}$.

The map from isospectral to Stiefel coordinates is similar: take the spectral decomposition of $S = \tilde{Z}\,{\rm diag}(c_1, \ldots, c_n)\, \tilde{Z}^T$ and remove the last $n - k_r$ columns from $\tilde{Z}$.
Given Stiefel coordinates $Z$, one forms $\tilde{Z}$ by extending the columns of $Z$ to an orthonormal basis for $\CC^n$. 
The equivalence between these coordinates follows from the definitions of the Stiefel and isospectral coordinates. 

To move from projection coordinates to isospectral coordinates one composes the maps passing through the Stiefel coordinates. 
Write the diagonal matrix 
\begin{align*}
    {\rm diag}(c_1, \ldots, c_n) = c_n {\rm Id}_n + (c_{k_r} - c_n) E_{k_r} + \cdots + (c_{k_1} - c_{k_2})E_{k_1}.
\end{align*}
Conjugating with the augmented Stiefel coordinates $\tilde{Z}$ yields the expression for $S$ in terms of projection coordinates.
\end{proof}

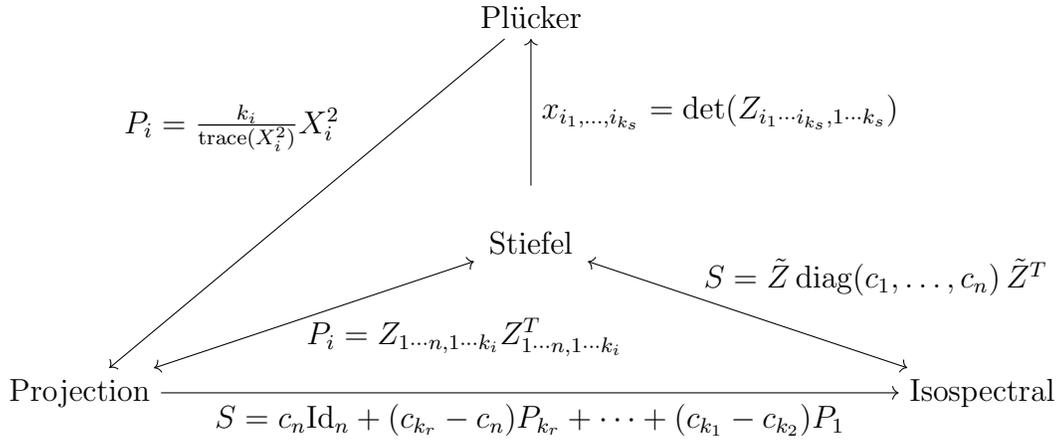
\begin{figure}[h]
  \begin{center}
\begin{tikzpicture}[node distance=2.5cm]
    \node[minimum size=1.5cm] at (0,0) (center) {\rm Stiefel};
    \node at (0, 3) (top) {\textrm{Pl\"{u}cker}};
    \node at (-6, -2) (left) {\rm Projection};
    \node at (6, -2) (right) {\rm Isospectral};

    \draw[->] (center) -- (top) node[midway, right] {$x_{i_1, \ldots, i_{k_s}} = \det(Z_{i_1 \cdots i_{k_s}, 1\cdots k_s})$};
    \draw[<->] (center) -- (left) node[midway,  xshift = 2cm, yshift=-0.3cm] {$P_i = Z_{1\cdots n, 1 \cdots k_i}Z_{1\cdots n, 1 \cdots k_i}^T$};;
    \draw[<->] (center) -- (right) node[midway, xshift = 1.7cm, yshift=0.5cm] {$S = \tilde{Z}\, {\rm diag}(c_1, \ldots, c_n) \,\tilde{Z}^T$};
    \draw[->] (left) -- (right) node[midway, below] {$S = c_{n} {\rm Id}_{n} + (c_{k_r} - c_{n}) P_{k_r} + \cdots + (c_{k_1} - c_{k_2}) P_1$};
    \draw[->] (top) -- (left) node[midway, xshift=-1cm, yshift=1cm] {$P_i = \frac{k_i}{{\rm trace}(X_i^2)} X_i^2$} ;   
\end{tikzpicture}
\end{center}  
  \caption{Diagram explaining how to move from one life of the flag variety to another. If $A \to B$ in the diagram, the edge label explains how to write the $B$ coordinates in terms of the $A$ coordinates. Two of the arrows are bidirectional, meaning that one direction comes from matrix multiplication and the other comes from a matrix factorization.}\label{fig:lives}
\end{figure}

\section{The Multi-Eigenvector  Problem}\label{sec:eigenvalue}
In this section, we give three different formulations of the 
multi-eigenvector problem (\ref{multi-eigenvector}). The first two are formulated as optimization problems over a Grassmannian in Stiefel and projection coordinates. The third one utilizes a flag variety in its isospectral formulation.  In all three cases we describe the sets of critical points of these optimization problems, where in the Stiefel case we describe the ${\rm O}(k)$-orbit of the critical points in $\mathcal V_{k,n}$. 

\subsection{Critical points in Stiefel coordinates}
The goal of this section is to prove Theorem \ref{thm:eigen} which characterizes the critical points of the multi-eigenvector problem in Stiefel coordinates. We first prove the following lemma.
\begin{lemma} \label{lem:diagonalization}
    Suppose $A\in \CC^{n \times n}$ is orthogonally diagonalizable and has full rank. 
    Let $Z \in \mathcal V_{k,n}$.
    If $M \in \CC^{k \times k}$ is such that $AZ = ZM$, then $M$ is also orthogonally diagonalizable. 
\end{lemma}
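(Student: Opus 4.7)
The plan is to show that $M$ is complex symmetric and diagonalizable, then invoke the theorem of Gantmacher cited in Section~\ref{sec:flaglives} (Theorem XI.4 of \cite{gantmacher}) to upgrade this to orthogonal diagonalizability. The full-rank hypothesis on $A$ will not actually enter the argument---it is presumably assumed for later use in the calling theorem, where we want eigenvalues of $M$ (which, as a side benefit of the proof, turn out to be eigenvalues of $A$) to be nonzero.

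First I would observe that because $Z^TZ = {\rm Id}_k$, the identity $AZ = ZM$ gives $M = Z^TAZ$. Since $A$ is orthogonally diagonalizable, $A = UDU^T$ with $U^TU = {\rm Id}_n$, and in particular $A^T = A$. Taking transposes of $M = Z^TAZ$ yields $M^T = Z^TA^TZ = Z^TAZ = M$, so $M$ is complex symmetric.

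Next I would establish that $M$ is diagonalizable by a Jordan-chain argument, using the fact that $\mathrm{col}(Z)$ is $A$-invariant and that $Z$ identifies $M$ with the restriction of $A$ to this subspace. Concretely, suppose $v \neq 0$ satisfies $(M - \lambda {\rm Id}_k)^s v = 0$ for some $s \geq 1$. Then
\begin{equation*}
(A - \lambda {\rm Id}_n)^s Zv \;=\; Z(M - \lambda {\rm Id}_k)^s v \;=\; 0,
\end{equation*}
so $Zv$ is a generalized eigenvector of $A$ at $\lambda$. Moreover $Zv \neq 0$, because otherwise $v = Z^T Z v = 0$. Since $A$ is diagonalizable, its generalized eigenvectors are genuine eigenvectors, so $(A - \lambda {\rm Id}_n)Zv = 0$. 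Left-multiplying by $Z^T$ gives $(M - \lambda {\rm Id}_k)v = 0$, proving that every generalized eigenvector of $M$ is an eigenvector and hence that $M$ is diagonalizable.

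Combining the two previous steps, $M$ is a diagonalizable complex symmetric matrix, and the Gantmacher result then delivers orthogonal diagonalizability. I do not expect any serious obstacle: the only subtle point is verifying $Zv \neq 0$ so that the Jordan chain in the image is genuine, and this is immediate from $Z^TZ = {\rm Id}_k$. The argument is essentially the standard fact that an invariant subspace of a diagonalizable operator inherits diagonalizability, dressed up with the symmetry that the Stiefel hypothesis on $Z$ provides.
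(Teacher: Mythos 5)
Your proof is correct, but it takes a genuinely different route from the paper's. The paper conjugates $AZ = ZM$ by an orthogonal eigenbasis $U$ of $A$ to get $M = (Z^TU)\Lambda(U^TZ)$ with $(Z^TU)(U^TZ) = {\rm Id}_k$, observes that the nonzero columns of $Z^TU$ are eigenvectors of $M$ with eigenvalues among the $\lambda_i$, and argues that all but $k$ of these columns must vanish, so that the surviving columns assemble directly into a complex orthogonal matrix diagonalizing $M$; this is where the full-rank hypothesis enters, and the counting implicitly uses that the $\lambda_i$ are distinct, which holds for the generic $A$ of the calling theorems though it is not among the lemma's hypotheses. You instead establish the two abstract properties --- $M = Z^TAZ$ is complex symmetric, and $M$ is diagonalizable because it represents the restriction of the diagonalizable operator $A$ to the invariant subspace spanned by the columns of $Z$ (your Jordan-chain computation, with $Zv \neq 0$ guaranteed by $Z^TZ = {\rm Id}_k$) --- and then invoke the Gantmacher theorem that the paper itself cites in Section~\ref{sec:flaglives} to upgrade to orthogonal diagonalizability. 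Your version buys generality and robustness: it uses neither full rank nor any genericity, confirming your suspicion that invertibility of $A$ is only needed downstream (e.g.\ in Theorem~\ref{thm:svd}, to pass from $M^2$ to $M$). The paper's version buys explicitness: it constructs the orthogonal diagonalizer concretely and exhibits the eigenvalues of $M$ as a subset of those of $A$, a fact you recover only as a side remark; either form of the conclusion suffices for the applications in Theorems~\ref{thm:eigen} and~\ref{thm:svd}.
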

\begin{proof}
    Let $A = U\Lambda U^T$ be a factorization of $A$ with $\Lambda$ diagonal and $U \in {\rm O}(n)$. 
    Then $AZ = U\Lambda U^T Z = ZM$ and rearranging we have $M = (Z^T U) \Lambda (U^T Z)$ where $(Z^TU)(U^TZ) = {\rm Id}_k$. 
    This factorization implies that for every diagonal entry $\lambda_i$ of $\Lambda$, $M (Z^T U)_i = \lambda_i(Z^T U)_i$. 
    Since $M$ has at most $k$ eigenvalues and the $\lambda_i$ are nonzero, $n - k$ of the columns of $Z^T U$ are forced to be zero. 
    This yields an orthogonal diagonalization for $M$.
\end{proof}

\begin{proof}[Proof of Theorem \ref{thm:eigen}]
  Write $Z_i$ for the $i$th column of $Z$. 
  Note first that $\textrm{tr} (Z^T A Z) = \sum_{j = 1}^k Z_i^T A Z_i$.
  This allows us to compute
  \begin{align*}
    \nabla \textrm{tr} (Z^T A Z) = 2\begin{pmatrix} Z_1^TA && Z_2^TA && \cdots && Z_k^T A \end{pmatrix}
  \end{align*}
  which is a row vector with $nk$ entries.
  The Jacobian of the polynomials defined by $Z^TZ = \textrm{Id}_k$ is $ \mathrm{Jac}(Z)$ as in \eqref{eq:stiefel-jac}.
  A critical point is a vector
  ${Z}$ satisfying ${Z}^T{Z} = \textrm{Id}_k$ where $\nabla \textrm{tr}({Z}^TA{Z})$ is in the row span of $\mathrm{Jac}({Z})$. 
  In particular, there exist $\mu_{ij} \in \CC$, $1 \leq i, j \leq k$ with $\mu_{ij} = \mu_{ji}$ such that $A Z_i =  \sum_{j=1}^k \mu_{ij} Z_j$. 
  These $\mu_{ij}$ can be assembled into a $k \times k$ symmetric matrix $M$ satisfying $A{ Z} = ZM$. 
  So $ Z$ is a critical point if and only if it satisfies ${ Z}^T { Z} = {\rm Id}_{k}$ and there exists a symmetric matrix $M \in {\rm Sym}({\CC}^k)$ such that $A{Z} = { Z} M$.

  If such a matrix $M$ exists, then by Lemma~\ref{lem:diagonalization}, it is orthogonally diagonalizable. 
  Therefore we replace the condition $A{ Z} = { Z}M$ with the condition $A{ Z} =  Z Q^T \Lambda Q$. where $Q \in {\rm O}(k)$ and $\Lambda$ is diagonal.
  Thus $ Z$ is a critical point if and only if it satisfies $ Z^T Z= {\rm Id}_k$ and $A ZQ^T =  ZQ^T \Lambda$. 
  These conditions hold precisely when the columns of $ZQ^T$ are a subset of an orthonormal eigenbasis for $A$, which proves that $ Z$ has the desired form. 
\end{proof}

\subsection{Critical points in projection coordinates}
In Stiefel coordinates, we 
work with an orbit of critical points in $\mathcal V_{k,n}$
which corresponds to a $k$-dimensional subspace spanned by $k$ eigenvectors of $A$. 
It is desirable to formulate this problem so that it has finitely many critical points, because equivalence classes are difficult to compute with. 
We therefore transform the multi-eigenvector problem into projection coordinates for the Grassmannian as in Section \ref{sec:flaglives}. 
If $Z$ is $n \times k$ and $Z^TZ = {\rm Id}_k$, then $P = ZZ^T$ projects onto the column space of $Z$. 
Then using the trace trick, we have ${\rm trace}(Z^T A Z) = {\rm trace}(AP)$ and we can reformulate our problem as
\begin{align}\label{eq:eigen-pgr-problem}
    \minmax_{\substack{P^2 = P \\ {\rm trace}(P) = k}} \,\, {\rm trace}(AP).
\end{align}
We first prove a lemma relating critical points under a polynomial map. We will use this result a few times in the paper. 
\begin{lemma} \label{lem:critical} Let $V \subset \CC^s$ and $W \subset \CC^t$ be smooth affine varieties. Let $\phi \, : \, V \longrightarrow W$ be a surjective morphism, and let $f \in \CC[W]$ be
a regular function. Then $z \in V$ and $p=\phi(z)$
are critical points of $f' = f \circ \phi$ and $f$, respectively, if the differential $d\phi_z \, : \, T_z(V) \longrightarrow T_p(W)$ is surjective. 
\end{lemma}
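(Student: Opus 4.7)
The plan is to reduce the statement to the chain rule for differentials on smooth affine varieties. Recall that for a regular function $g$ on a smooth affine variety $W$, a point $q\in W$ is a critical point of $g$ precisely when the intrinsic differential $dg_q\colon T_q(W)\to\CC$ vanishes. Equivalently (as used throughout the paper), the ambient gradient $\nabla g$ at $q$ lies in the row span of the Jacobian of the defining ideal of $W$ at $q$, which is the conormal space to $T_q(W)$. The lemma is an assertion about how these vanishing conditions transfer between $V$ and $W$ under a morphism whose differential is surjective.

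The first step is to apply the chain rule to $f' = f\circ\phi$. Since $V$ and $W$ are smooth at $z$ and $p=\phi(z)$, and $\phi$ is a morphism of affine varieties, the intrinsic differentials fit into a commutative diagram giving
\begin{equation*}
df'_z \;=\; df_p \circ d\phi_z \;:\; T_z(V) \longrightarrow T_p(W) \longrightarrow \CC.
\end{equation*}
From this identity, one direction is immediate and does not require surjectivity: if $df_p\equiv 0$ on $T_p(W)$, then $df'_z = df_p\circ d\phi_z\equiv 0$ on $T_z(V)$, so $p$ critical for $f$ forces $z$ critical for $f'$.

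The second step is the converse, which is where the surjectivity hypothesis enters. Suppose $df'_z\equiv 0$. Given any $v\in T_p(W)$, by surjectivity of $d\phi_z$ there exists $u\in T_z(V)$ with $d\phi_z(u)=v$, and then
\begin{equation*}
df_p(v) \;=\; df_p\bigl(d\phi_z(u)\bigr) \;=\; df'_z(u) \;=\; 0.
\end{equation*}
Since $v$ was arbitrary, $df_p\equiv 0$ on $T_p(W)$, so $p$ is a critical point of $f$. Combined with the previous step, this gives the equivalence of the two criticality conditions under the surjectivity hypothesis on $d\phi_z$.

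There is no real obstacle here; the only point requiring care is distinguishing the intrinsic differential on the tangent space of the variety from the ambient gradient, so that the chain rule applies cleanly. Smoothness of $V$ and $W$ at $z$ and $p$ ensures that the tangent spaces have the expected dimensions and that $d\phi_z$ is well defined as a linear map between them, which is all that is needed to run the argument above.
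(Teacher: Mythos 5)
Your proof is correct and follows essentially the same route as the paper: both rest on the chain rule $df'_z = df_p\circ d\phi_z$ (the paper writes it in ambient coordinates as $\nabla_z(f')=\nabla_p(f)\cdot \mathrm{Jac}_z\phi$ and phrases criticality via orthogonality to the tangent space, while you phrase it via vanishing of the intrinsic differential), with surjectivity of $d\phi_z$ used to transfer the vanishing condition from $T_z(V)$ back to all of $T_p(W)$, and the other direction being immediate composition. The two presentations are the same argument in different notation, so there is nothing to add.
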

\begin{proof} The differential $d\phi_z$ is, 
in coordinates, equal to the Jacobian of $\phi$ restricted to $T_z(V)$. By the chain rule, the gradient of $f'$ equals $\nabla_z (f') = \nabla_p(f) \cdot  {\rm Jac}_z \phi$, where $\nabla_p(f)$ is a row vector. Suppose $z \in V$ is a critical point of $f'$. This means that $\nabla_z(f') \in T_z(V)^{\perp}$,  
i.e., $\nabla_z(f') \cdot v = 0$ for all $v \in T_z(V)$.  Equivalently, $\nabla_p(f) \cdot {\rm Jac}_z\phi \cdot v =0$ for all $v \in T_z(V)$.  This implies that $\nabla_p(f)$ is orthogonal to the subspace of the tangent space $T_p(W)$ that is the image of $T_z(V)$ under the differential $d\phi_z$.   Now, if $d\phi_z$ is surjective, then it is guaranteed that  $\nabla_p(f)$ is in  $T_p(W)^{\perp}$, and hence $p$ is a critical point of $f$. For the converse, pick a critical point $p \in W$. So $\nabla_p(f)$ is in $T_p(W)^{\perp}$. 
Since $\phi$ is surjective, let $z$ be any point in the fiber $\phi^{-1}(p)$. By the surjectivity of the differential $d\phi_z$ the image of $T_z(V)$ under the differential is $T_p(W)$, and since  $\nabla_z(f') \cdot v =  \nabla_p(f) \cdot {\rm Jac}_z \phi \cdot v = 0$
for all $v \in T_p(V)$ we conclude that $z$ is a critical point of $f'$ for every $z \in \phi^{-1}(p)$. 
\end{proof}

\begin{theorem}\label{thm:eigen-pgr-degree}
    The degree of the multi-eigenvector problem \eqref{eq:eigen-pgr-problem} over the projection Grassmannian $\mathrm{pGr}(k, n)$ is $\binom{n}{k}$. 
    The critical points are the $k$-dimensional subspaces spanned by $k$ eigenvectors of $A$. 
\end{theorem}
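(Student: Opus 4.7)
The plan is to transfer the description of critical points from Stiefel coordinates to projection coordinates via Lemma~\ref{lem:critical}. Consider the surjective $\mathrm{O}(k)$-invariant morphism $\phi : V_{k,n} \to {\rm pGr}(k,n)$ given by $\phi(Z) = ZZ^T$, and the regular functions $f'(Z) = {\rm trace}(Z^TAZ)$ on $V_{k,n}$ and $f(P) = {\rm trace}(AP)$ on ${\rm pGr}(k,n)$. By the cyclic property of the trace, $f' = f \circ \phi$, so the multi-eigenvector problem in Stiefel coordinates pulls back precisely to \eqref{eq:eigen-pgr-problem}.

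First I would verify the hypothesis of Lemma~\ref{lem:critical}, namely that the differential $d\phi_Z$ is surjective at every $Z \in V_{k,n}$. The map $\phi$ is equivariant under the $\mathrm{O}(n)$-action $Z \mapsto QZ$, $P \mapsto QPQ^T$, and this action is transitive on both $V_{k,n}$ and ${\rm pGr}(k,n)$, so it suffices to check surjectivity at the single point $Z_0 = [e_1 \, \cdots \, e_k]$, where $P_0 = \phi(Z_0) = E_k$. A direct computation of the map $\dot{Z} \mapsto \dot{Z}Z_0^T + Z_0 \dot{Z}^T$ on $T_{Z_0}V_{k,n}$ (the tangent space cut out by the Jacobian in \eqref{eq:stiefel-jac}) shows it surjects onto $T_{P_0}{\rm pGr}(k,n)$. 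A dimension check provides a sanity verification: the fibers of $\phi$ are the $\mathrm{O}(k)$-orbits of dimension $\binom{k}{2}$, and $\dim V_{k,n} - \binom{k}{2} = nk - \binom{k+1}{2} - \binom{k}{2} = k(n-k) = \dim {\rm pGr}(k,n)$.

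Next I would apply Lemma~\ref{lem:critical} in both directions: the critical points of $f$ on ${\rm pGr}(k,n)$ are exactly the images under $\phi$ of the critical points of $f'$ on $V_{k,n}$. By Theorem~\ref{thm:eigen}, the latter form $\binom{n}{k}$ disjoint $\mathrm{O}(k)$-orbits, indexed by subsets $\{i_1, \ldots, i_k\} \in \binom{[n]}{k}$, each of the form $\{[u_{i_1} \cdots u_{i_k}]Q : Q \in \mathrm{O}(k)\}$. Since $\phi\bigl([u_{i_1} \cdots u_{i_k}]Q\bigr) = [u_{i_1} \cdots u_{i_k}][u_{i_1} \cdots u_{i_k}]^T$ is independent of $Q$, each orbit collapses to a single point in ${\rm pGr}(k,n)$: the orthogonal projection onto $\mathrm{span}(u_{i_1}, \ldots, u_{i_k})$. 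These $\binom{n}{k}$ critical projections are pairwise distinct because for a generic symmetric $A$ the eigenvectors $u_1, \ldots, u_n$ are linearly independent, so distinct $k$-subsets span distinct subspaces.

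The main obstacle is the surjectivity verification for $d\phi_Z$, which is a routine but fiddly Jacobian computation; everything else reduces to invoking Theorem~\ref{thm:eigen} and Lemma~\ref{lem:critical}, plus an easy bookkeeping argument that the $\mathrm{O}(k)$-orbits of Stiefel critical points are in bijection with the resulting projection matrices.
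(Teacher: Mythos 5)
Your proposal is correct and follows essentially the same route as the paper: pass through the map $Z \mapsto ZZ^T$, check surjectivity of its differential at $[e_1 \, \cdots \, e_k]$ using the transitive ${\rm O}(n)$-action, and then apply Lemma~\ref{lem:critical} together with Theorem~\ref{thm:eigen} to see that each ${\rm O}(k)$-orbit of Stiefel critical points collapses to one of $\binom{n}{k}$ distinct projection matrices. The only cosmetic difference is that you phrase the surjectivity check via the intrinsic formula $\dot{Z} \mapsto \dot{Z}Z_0^T + Z_0\dot{Z}^T$, while the paper writes out the Jacobian of the parametrization explicitly; both computations are the same in substance.
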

\begin{proof}
   Consider the parametrization $\Phi$ from the Stiefel variety $\mathcal V_{k,n}$ to the projection Grassmannian which sends $Z \mapsto ZZ^T$. This is a surjective morphism between the two smooth varieties 
   (see the proof of Theorem 5.2 in \cite{DFRS} for the smoothness of the projection Grassmannian) where an entire orbit in Theorem \ref{thm:eigen} is sent to a single projection matrix.  Now we prove that the differential $d\Phi_Z$ is surjective.  Since ${\rm O}(n)$ acts transitively on $\mathcal V_{k,n}$, we consider the tangent space $T_{[e_1 \cdots e_k]}(\mathcal V_{k,n})$.
   We note that the columns of ${\rm Jac}(Z)$ as in (\ref{eq:stiefel-jac})
   are indexed by $z_{ij}$ 
   for $i=1,\ldots, n$ and $j=1, \ldots, k$. We see that the standard unit vectors $e_{ij}$ for $i=k+1, \ldots,n$ and $j=1, \ldots, k$ are in the kernel of ${\rm Jac}([e_1 \cdots e_k])$. We next compute the Jacobian of $\Phi$. For this, we order the $kn$ columns of ${\rm Jac}_Z\Phi$ by the entries of the rows of $Z$. 
   It will be more convenient to 
   look at blocks of entries in each row corresponding the rows of $Z$ which  we denote by $\tilde{Z}_1, \ldots, \tilde{Z}_n$. 
   The rows of ${\rm Jac}_Z\Phi$ correspond to the $\binom{n+1}{2}$ 
   entries of the symmetric matrix $ZZ^T$. We order the rows by first the diagonal entries of this matrix, then the first superdiagonal, followed by the second superdiagonal, etc. Then the first $n$ rows of ${\rm Jac}_Z \Phi$ is ${\rm diag}(2\tilde{Z}_1, \ldots, 2\tilde{Z}_n)$. The next $n-1$ rows will be of the form
   $$(0 \cdots 0 \, \tilde{Z}_{i+1} \, \tilde{Z}_i \, 0 \cdots 0)$$ for $i=1, \ldots, n-1$ where $\tilde{Z}_i$ appears at block $i+1$. The following $n-2$ rows are of the form
   $$(0 \cdots 0 \, \tilde{Z}_{i+2} \, 0  \, \tilde{Z}_i \, 0 \cdots 0)$$  
   for $i=1, \ldots, n-2$ where $\tilde{Z}_i$ appears at block $i+2$, etc. The last row will be 
   $(\tilde{Z}_n \, 0 \cdots 0 \, \tilde{Z}_1)$. Evaluating this Jacobian at $[e_1 \cdots e_k]$ produces $k(n-k)$ distinct standard unit vectors of $\CC^{kn}$ in the last $k(n-k)$ columns indexed by the entries 
   of $\tilde{Z}_{k+1}, \ldots, \tilde{Z}_n$. Hence, multiplying ${\rm Jac}_Z\Phi$ with the vectors $e_{ij}$ for $i=k+1, \ldots, n$ and $j=1, \ldots, k$ that are tangent vectors to $\mathcal V_{k,n}$ at $Z$
   gives $k(n-k)$ linearly independent vectors in the tangent space to ${\rm pGr}(k,n)$ at $ZZ^T$. Since the dimension of this smooth Grassmannian is $k(n-k)$ we conclude that $d\Phi_Z$ is surjective. We finish the proof by applying Lemma \ref{lem:critical}.
\end{proof}
The trace of the objective function is a generic linear function in the entries of $A$:
\begin{align*}
    {\rm trace}(AP) = \sum_i a_{ii} p_{ii} + 2\sum_{i < j} a_{ij}p_{ij}.
\end{align*}
Thus solving the multi-eigenvector problem on the projection Grassmannian is equivalent to solving a generic linear optimization problem on the same variety. 
The number of critical points of optimizing a generic linear form on a variety is called the \emph{linear optimization degree (LO degree)} of the variety \cite{MRWW}.
\begin{corollary}\label{cor:grassLO}
    The LO degree of $\mathrm{pGr}(k,n)$ is $\binom{n}{k}$.
\end{corollary}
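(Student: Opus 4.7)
The plan is a direct consequence of Theorem \ref{thm:eigen-pgr-degree}. First I would verify that as $A$ ranges over generic symmetric $n\times n$ matrices, the linear form $P \mapsto {\rm trace}(AP)$ ranges over a Zariski open subset of linear forms on the ambient affine space $\mathrm{Sym}(\CC^n)\cong \CC^{\binom{n+1}{2}}$ in which $\mathrm{pGr}(k,n)$ sits. This holds because the trace pairing $(A,P) \mapsto {\rm trace}(AP)$ is nondegenerate on symmetric matrices, so the assignment $A \mapsto {\rm trace}(A\,\cdot)$ is a linear isomorphism between the space of symmetric matrices and the space of linear functionals on $\mathrm{Sym}(\CC^n)$. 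Hence every linear form on the ambient space of $\mathrm{pGr}(k,n)$ is of the form ${\rm trace}(AP)$ for a unique symmetric $A$, and the Zariski open subset of symmetric matrices for which Theorem~\ref{thm:eigen-pgr-degree} applies corresponds under this isomorphism to a Zariski open subset of linear forms on $\mathrm{Sym}(\CC^n)$.

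Then I would invoke Theorem \ref{thm:eigen-pgr-degree}, which states that for generic symmetric $A$ the multi-eigenvector problem \eqref{eq:eigen-pgr-problem} has exactly $\binom{n}{k}$ complex critical points on $\mathrm{pGr}(k,n)$. Since the LO degree is by definition the number of complex critical points of a generic linear objective on the variety, the two counts coincide, giving $\mathrm{LOdeg}(\mathrm{pGr}(k,n)) = \binom{n}{k}$. There is no serious obstacle in this argument; the only point to check carefully is that the parametric family of objectives $\{{\rm trace}(AP) : A \in \mathrm{Sym}(\CC^n)\}$ is generic enough inside the full space of linear forms on the ambient affine space, and the trace-pairing isomorphism handles this at once.
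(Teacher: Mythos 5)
Your proposal is correct and follows essentially the same route as the paper: the paper also notes that ${\rm trace}(AP) = \sum_i a_{ii}p_{ii} + 2\sum_{i<j}a_{ij}p_{ij}$ is a generic linear form in the coordinates of ${\rm pGr}(k,n)$ as $A$ varies over generic symmetric matrices (your trace-pairing isomorphism is the same observation), and then cites Theorem~\ref{thm:eigen-pgr-degree} to conclude the LO degree is $\binom{n}{k}$.
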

\subsection{Critical points in isospectral coordinates}
We now present the most general formulation of the multi-eigenvector problem. 
While the Stiefel and projection formulations are optimization problems over the Grassmannian, writing this problem in isospectral coordinates allows us to generalize to an optimization problem over a flag variety ${\rm Fl}({\bf k}; n) = {\rm Fl}(k_1, \ldots, k_r; n)$. 
We fix $c_1, \ldots, c_n \in \CC$ with $c_1 = \cdots = c_{k_1}$, $c_{k_1+1} = \cdots = c_{k_2}$, $\cdots$, $c_{k_r+1} = \cdots = c_n$. We let $X_0 = {\rm diag}(c_1, \ldots, c_n)$. With this, we introduce the following  linear optimization problem over the isospectral model ${\rm Fl}_{\bf c}({\bf k};n)$
\begin{align}\label{eq:iso-eval}
    \minmax_{S \in {\rm Fl}_{\bf c}({\bf k}; n)} \,\, {\rm trace}(AS).
\end{align}
If the $c_i$ are distinct, then the optimization problem is over the complete flag variety and as we will see it has $n!$ critical points. 
On the other hand, if $c_1 = \cdots = c_k = 1$ and $c_{k+1} = \cdots = c_n = 0$, then $S = Q X_0 Q^T$ is a point in ${\rm pGr}(k,n)$ and we recover the optimization problem \eqref{eq:eigen-pgr-problem}.
The following theorem interpolates between these extreme cases. 

\begin{theorem}\label{thm:eigen-iso-degree}
  The linear optimization problem \eqref{eq:iso-eval} over the isospectral flag variety ${\rm Fl}_{\bf c}({\bf k};n)$ has $\binom{n}{k_1, k_2-k_1, \ldots, n-k_r}$ critical points.
\end{theorem}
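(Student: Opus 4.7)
The plan is to lift the optimization to ${\rm SO}(n)$ via the smooth surjective parametrization $\phi\colon {\rm SO}(n) \to {\rm Fl}_{\bf c}({\bf k};n)$, $Q \mapsto QX_0Q^T$, and invoke Lemma~\ref{lem:critical}. The tangent space to ${\rm SO}(n)$ at $Q$ is $\{Q\Omega : \Omega^T = -\Omega\}$, and differentiating $\phi$ gives $d\phi_Q(Q\Omega) = Q[\Omega, X_0]Q^T$. Its kernel consists of the skew $\Omega$ commuting with $X_0$, i.e.\ the skew block-diagonal matrices matching the block pattern of $X_0$, of dimension $\sum_{i=0}^{r}\binom{k_{i+1}-k_i}{2}$ (setting $k_0=0$, $k_{r+1}=n$). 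Hence the image has dimension $\binom{n}{2} - \sum_i \binom{k_{i+1}-k_i}{2}$, matching $\dim {\rm Fl}({\bf k};n)$ by Proposition~\ref{dim-formula}. Since the isospectral model is smooth by Theorem~\ref{thm:iso_ideal}, $d\phi_Q$ is surjective and Lemma~\ref{lem:critical} applies.

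Next I would derive and solve the critical point equation on ${\rm SO}(n)$. Differentiating ${\rm trace}(A \cdot Qe^{t\Omega}X_0 e^{-t\Omega}Q^T)$ at $t=0$ and using cyclicity of the trace yields ${\rm trace}(\Omega\,[X_0, Q^TAQ])$. Because $[X_0, B]$ is skew whenever $B$ is symmetric and $X_0$ diagonal, the vanishing of this pairing against every skew $\Omega$ forces $[X_0, Q^TAQ]=0$. Thus $Q$ is critical iff $B := Q^TAQ$ is block-diagonal with blocks of sizes $k_1, k_2-k_1, \ldots, n-k_r$. Block-diagonalizing $B = PDP^T$ with $P$ orthogonal and block-diagonal (matching $X_0$) and $D$ diagonal, we obtain $A = (QP)D(QP)^T$, an orthogonal diagonalization of $A$; and since $P$ commutes with $X_0$,
\[
S \;=\; QX_0Q^T \;=\; (QP)\,X_0\,(QP)^T.
\]
So the critical values are precisely the matrices $\tilde Q X_0 \tilde Q^T$ as $\tilde Q$ ranges over orthogonal diagonalizations of $A$.

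Finally, I would count. Genericity of $A$ gives distinct eigenvalues, so its orthogonal diagonalizations in ${\rm O}(n)$ number exactly $n!\cdot 2^n$ (signed permutations of a fixed orthonormal eigenbasis). Two diagonalizations $\tilde Q_1, \tilde Q_2$ produce the same $S$ iff $\tilde Q_2^{-1}\tilde Q_1$ commutes with $X_0$; among signed permutations these are exactly the ones preserving each block of $X_0$ as a set. Each block of size $k_{i+1}-k_i$ contributes $(k_{i+1}-k_i)!\cdot 2^{k_{i+1}-k_i}$ such symmetries, and dividing gives
\[
\frac{n!\cdot 2^n}{\prod_{i=0}^r (k_{i+1}-k_i)!\cdot 2^{k_{i+1}-k_i}} \;=\; \binom{n}{k_1,\, k_2-k_1,\, \ldots,\, n-k_r}.
\]
I expect the main obstacle to be the bookkeeping in this count, namely verifying that distinct equivalence classes of signed permutations yield genuinely distinct matrices $S$; this reduces to the distinctness of the eigenvalues of $A$ together with the prescribed block structure of $X_0$.
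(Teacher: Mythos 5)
Your proposal is correct and follows essentially the same route as the paper: lift through the isospectral parametrization, check surjectivity of the differential so that Lemma~\ref{lem:critical} applies, characterize upstairs critical points by $[X_0,Q^TAQ]=0$ (block-diagonality), and count the resulting isospectral matrices, which the paper organizes as cosets in $S_n/(S_{k_1}\times\cdots\times S_{n-k_r})$ (Theorem~\ref{thm:eigen-iso-impl}) rather than as signed permutations modulo block symmetries. The only points to make explicit over $\CC$ are that each complex symmetric block of $Q^TAQ$ is orthogonally diagonalizable — it is diagonalizable because $A$ is, and then the Gantmacher fact quoted before Theorem~\ref{thm:iso_ideal} applies — and the harmless sign adjustment within a block needed because you parametrize by ${\rm SO}(n)$ while the block-diagonalizing matrix $P$ may have determinant $-1$.
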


We first prove the following which is an analog of Theorem \ref{thm:eigen}. 
\begin{theorem} \label{thm:eigen-iso-impl} 
    Let $A$ be a generic real symmetric $n \times n$ matrix and $X_0 = {\rm diag}(c_1, \ldots, c_n)$ with $c_{k_j+1} = \cdots = c_{k_{j+1}}$ for $j=0, \ldots, r$ where $k_0=0$ and $k_{r+1} = n$.
    The algebraic set of complex critical points of the optimization problem
    \begin{align} \label{eq:eigen-iso-param}
        \minmax_{Q^TQ = {\rm Id}_n }\,\, {\rm trace}(A QX_0Q^T)
    \end{align}
    is equal to 
    \begin{align}\label{eq:iso-cps}
     \bigsqcup_{\sigma \in  S_n/(S_{k_1} \times S_{k_2-k_1} \cdots \times S_{n-k_r})}
     \{ U(\sigma \cdot R) \, : \, R \in  {\rm O}(k_1) \times {\rm O}(k_2-k_1) \times \cdots \times {\rm O}(n - k_{r})\} 
     \end{align}
    where $U = [u_1 \, u_2 \, \cdots \, u_n]$ for $u_1, \ldots, u_n$ an orthonormal eigenbasis of $A$ and $\sigma$ acts by permuting the rows of $R$. This algebraic set is a disjoint union of 
    $\binom{n}{k_1, k_2-k_1, \ldots, n-k_r}$ 
    varieties isomorphic to 
    $ {\rm O}(k_1) \times {\rm O}(k_2-k_1) \times \cdots \times {\rm O}(n - k_{r}) $. 
\end{theorem}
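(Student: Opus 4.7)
The plan is to reduce the Lagrange optimality conditions to a commutation relation with $X_0$ and then exploit the generic spectral structure of $A$ to recover the claimed orbit decomposition. I would begin by differentiating the objective; using cyclicity of the trace together with $A^T=A$ and $X_0^T=X_0$, one obtains $\nabla_Q\,{\rm trace}(AQX_0Q^T) = 2AQX_0$. The constraint $Q^TQ - {\rm Id}_n = 0$ contributes a symmetric Lagrange multiplier matrix $M$, so the first-order conditions reduce to $AQX_0 = QM$ together with $Q^TQ = {\rm Id}_n$ and $M = M^T$. Left-multiplying by $Q^T$ will yield $M = Q^TAQX_0$, and symmetry of $M$ then translates into the commutation relation $[X_0,\, Q^TAQ] = 0$.

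Next I would exploit the block structure of $X_0$. Under the genericity hypothesis on ${\bf c}$, the $r+1$ distinct scalars appearing on the diagonal of $X_0$ correspond to the $r+1$ blocks of sizes $k_1,\,k_2-k_1,\ldots,n-k_r$, and a direct block calculation shows that the matrices commuting with $X_0$ are precisely those block diagonal in this partition. Thus $Q\in{\rm O}(n)$ is critical if and only if $Q^TAQ$ is block diagonal with blocks of the prescribed sizes. Diagonalizing $A = U\Lambda U^T$ with $U\in{\rm O}(n)$ and $\Lambda$ diagonal (generically with distinct entries), and setting $R = U^TQ\in{\rm O}(n)$, the condition becomes that $R^T\Lambda R$ is block diagonal in the same partition.

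I would then interpret this geometrically. Partitioning the columns of $R$ into blocks $R_{(1)},\ldots,R_{(r+1)}$ of sizes $k_j-k_{j-1}$, the vanishing of the off-diagonal blocks $R_{(a)}^T\Lambda R_{(b)}$ for $a\neq b$, together with $R^TR = {\rm Id}_n$ (which makes the standard bilinear form non-degenerate on each $V_j := {\rm col}(R_{(j)})$), is equivalent to each $V_j$ being $\Lambda$-invariant. Since $\Lambda$ has pairwise distinct diagonal entries, every $\Lambda$-invariant subspace is a coordinate subspace; hence $V_j = {\rm span}\{e_i : i\in S_j\}$ for an ordered partition $(S_1,\ldots,S_{r+1})$ of $\{1,\ldots,n\}$ with $|S_j| = k_j-k_{j-1}$.

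Finally, for each ordered partition I would write $R_{(j)} = E_{S_j}O_j$, where $E_{S_j}$ collects the standard basis vectors indexed by $S_j$ and $O_j\in{\rm O}(k_j-k_{j-1})$; this identifies the corresponding component with ${\rm O}(k_1)\times\cdots\times{\rm O}(n-k_r)$. Ordered partitions of $\{1,\ldots,n\}$ with these block sizes are in bijection with cosets $\sigma\in S_n/(S_{k_1}\times\cdots\times S_{n-k_r})$, and the assembly $R = (E_{S_1}O_1\mid\cdots\mid E_{S_{r+1}}O_{r+1})$ is precisely the row-permutation action of $\sigma$ on the block diagonal matrix ${\rm diag}(O_1,\ldots,O_{r+1})$. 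The coset count is the multinomial $\binom{n}{k_1,\,k_2-k_1,\ldots,n-k_r}$, and substituting $Q = UR$ recovers the claimed disjoint union. The main bookkeeping hurdle will be matching the coset and row-permutation parametrization to the one written in the statement; the mathematical core is the reduction to the commutation relation and the subsequent identification of $\Lambda$-invariant coordinate subspaces.
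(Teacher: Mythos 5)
Your proposal is correct and follows essentially the same route as the paper: reduce the Lagrange conditions to symmetry of $Q^TAQX_0$, i.e.\ to $Q^TAQ$ commuting with $X_0$ and hence being block diagonal in the partition $(k_1,k_2-k_1,\ldots,n-k_r)$, then use the spectral decomposition of the generic $A$ to identify the components and count cosets. The only minor divergence is in the converse step, where the paper orthogonally diagonalizes each complex symmetric block of $Q^TAQ$ and reassembles, while you instead classify the $\Lambda$-invariant column-block spans as coordinate subspaces; both arguments are valid and lead to the same coset and multinomial bookkeeping.
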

\begin{proof}
  Similar to the proof of Theorem~\ref{thm:eigen}, a matrix $Q \in {\rm O}(n)$ is a critical point of the problem if and only if there exists a symmetric matrix $M$ such that $AQX_0 = QM$.
  The condition that $Q^TAQX_0$ is symmetric is equivalent to requiring that $Q^TAQ$ and $X_0$ commute, which occurs precisely when $Q^TAQ$ is a block diagonal matrix with blocks of size $k_1,(k_2-k_1), \ldots, (n-k_r)$.
 If $Q$ is in the disjoint union \eqref{eq:iso-cps}, then 
    \begin{align*}
       Q^T AQ = (R^T \cdot \sigma^{-1})U^T AU (\sigma \cdot R) = R^T \Lambda R
    \end{align*}
    where $\Lambda$ is the diagonal matrix with eigenvalues of $A$ on the diagonal. 
    Since $\Lambda$ is diagonal and $R$ has the desired block structure, the points in the disjoint union are critical points.

    Conversely, suppose $Q^TAQ$ has the desired block structure. 
    Since $A$ is diagonalizable, every block of $Q^T A Q$ is diagonalizable.
    Assembling these blocks gives a diagonalization $Q^T AQ = R \Lambda R^T$ where $R \in  {\rm O}(k_1) \times {\rm O}(k_2-k_1) \times   \cdots \times {\rm O}(n-k_r)$, up to a permutation of the rows of $R$.
    But then $A = QR \Lambda R^TQ^T$, which implies that $Q = UR^T$, as desired.    
\end{proof}
\begin{proof}[Proof of Theorem~\ref{thm:eigen-iso-degree}]
    The parameterization ${\rm SO}(n) \to {\rm Fl}_{\bf c}({\bf k}; n)$ can naturally be extended to $\Phi \colon {\rm O}(n) \to {\rm Fl}_{\bf c}({\bf k}; n)$. 
    The fiber of a point in ${\rm Fl}_{\bf c}({\bf k}; n)$ is isomorphic to a copy of ${\rm O}(k_1) \times \cdots \times {\rm O}(n - k_r)$. 
    Once we prove that the parametrization takes critical points to critical points, we will be finished as in the proof of Theorem \ref{thm:eigen-pgr-degree}.
    Indeed, we may apply Lemma~\ref{lem:critical}, since the parametrization is surjective and $ {\rm Fl}_{\bf c}({\bf k}; n)$ is smooth by Theorem~\ref{thm:iso_ideal}.     

    Since there is a transitive ${\rm O}(n)$-action on ${\rm O}(n)$, it suffices to compute the dimension of the image of the Jacobian evaluated at the tangent space of the identity matrix ${\rm Id}_n$. 
    For all $1 \leq i < j \leq n$ we have $e_{ij} - e_{ji} \in T_{{\rm Id}_n}({{\rm O}(n)})$. 
    These $\binom{n}{2}$ vectors are linearly independent. 

    We now compute ${\rm Jac}_{{\rm Id}_n}(\Phi)$. 
    This matrix has rows of the form $2c_i e_{ii}^T$ for all $i = 1, \ldots, n$ and of the form $c_je_{ij}^T + c_i e_{ji}^T$ for $1 \leq i < j \leq n$. 
    Therefore ${\rm Jac}_{{\rm Id}_n}(\Phi) \cdot (e_{ij} - e_{ji})$ is zero when $c_i = c_j$ and otherwise has one nonzero entry. 
    Since the nonzero entry has a different index for distinct pairs $i,j$, this process produces $\dim({\rm Fl}_{\bf c}({\bf k}; n))$ linearly independent vectors. 
\end{proof}
\begin{corollary}
    The LO degree of the isospectral model ${\rm Fl}_{\bf c}({\bf k};n)$ is $\binom{n}{k_1, k_2-k_1, \ldots, n-k_r}$.
\end{corollary}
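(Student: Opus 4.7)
The plan is to obtain this corollary as a direct consequence of Theorem~\ref{thm:eigen-iso-degree}, by verifying that the objective function in \eqref{eq:iso-eval} is a \emph{generic} linear form on the ambient space of ${\rm Fl}_{\bf c}({\bf k};n)$.

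First I would observe that ${\rm Fl}_{\bf c}({\bf k};n)$ is embedded as an affine subvariety of the space ${\rm Sym}(\CC^n)$ of symmetric $n \times n$ matrices, which has coordinates $s_{ij}$ for $i \le j$. For any symmetric matrix $A$ we have
\[
  {\rm trace}(AS) \,=\, \sum_{i} a_{ii} s_{ii} \,+\, 2\sum_{i<j} a_{ij} s_{ij},
\]
which is a linear form in the coordinates $s_{ij}$ with coefficients $a_{ii}$ and $2a_{ij}$. As $A$ ranges over symmetric matrices, the coefficient vector $(a_{ij})_{i\le j}$ ranges over all of $\CC^{\binom{n+1}{2}}$ (up to the harmless factor of $2$ on the off-diagonal entries). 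Thus the set of linear forms of the form ${\rm trace}(AS)$ with $A$ symmetric is exactly the full space of linear forms on ${\rm Sym}(\CC^n)$, and genericity of $A$ translates to genericity of the linear functional.

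Next I would invoke Theorem~\ref{thm:eigen-iso-degree}, which states that for a generic symmetric $A$ the optimization problem \eqref{eq:iso-eval} has exactly $\binom{n}{k_1,\,k_2-k_1,\,\ldots,\,n-k_r}$ complex critical points on ${\rm Fl}_{\bf c}({\bf k};n)$. By the definition of the LO degree from \cite{MRWW}, the LO degree of a variety $X \subseteq \CC^N$ equals the number of complex critical points of a generic linear function restricted to $X$. Combining these two observations yields the claimed value for the LO degree.

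There is no real obstacle here; the only thing to be careful about is checking that the parametrization $A \mapsto {\rm trace}(A\,\cdot\,)$ from symmetric matrices to linear functionals on ${\rm Sym}(\CC^n)$ is surjective (equivalently, that the pairing $(A,S) \mapsto {\rm trace}(AS)$ is non-degenerate on symmetric matrices), so that generic $A$ corresponds to a generic linear functional. This is immediate from the explicit formula above. Hence the corollary follows from Theorem~\ref{thm:eigen-iso-degree} without further work.
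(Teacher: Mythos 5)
Your proposal is correct and matches the paper's (implicit) reasoning: the paper states this corollary as an immediate consequence of Theorem~\ref{thm:eigen-iso-degree}, relying on exactly the observation you spell out — that ${\rm trace}(AS)$ for generic symmetric $A$ is a generic linear form on ${\rm Sym}(\CC^n)$, via the nondegeneracy of the trace pairing (the same point made explicitly before Corollary~\ref{cor:grassLO} in the Grassmannian case). No difference in approach worth noting.
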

We close this section with an observation that came to our attention after the first version of our paper was posted, and that gives an alternate proof of Theorem~\ref{thm:eigen-iso-degree}. For this, we recall the {\it Euclidean distance degree} of a variety $X \subset \mathbb{R}^n$ \cite{DHOST}. It is the algebraic degree, namely, the number of complex critical points of 
\begin{align} 
 \min_{x \in X} & \, \, \, || u - x ||^2
\end{align}
where $u \in \mathbb{R}^n$ is a generic point. 
\begin{proposition} [Proposition 3.1, \cite{LLY}] The Euclidean distance degree of the isospectral model ${\rm Fl}_{\bf c}({\bf k};n)$ is $\binom{n}{k_1, k_2-k_1, \ldots, n-k_r}$. 
\end{proposition}
\begin{corollary} The degree of the multi-eigenvector problem 
over the isospectral flag variety ${\rm Fl}_{\bf c}({\bf k};n)$ is equal to its Euclidean distance degree.
\end{corollary}
\begin{proof} The Euclidean distance minimization problem over 
${\rm Fl}_{\bf c}({\bf k};n)$ is
\begin{align} 
 \min_{S \in {\rm Fl}_{\bf c}({\bf k}; n)} & \, \, \, || A - S ||^2
\end{align}
where $A$ is a generic symmetric matrix. We note that 
$|| A - S||^2 = {\rm trace}(A^2) - 2  {\rm trace}(AS) + {\rm trace}(S^2)$. Both ${\rm trace}(A^2)$ and ${\rm trace}(S^2) = {\rm trace}(X_0^2)$ are constant, and the result follows. 
\end{proof}

\section{Heterogeneous Quadratics Minimization Problem}\label{sec:heterogeneous}
The heterogeneous quadratics minimization problem generalizes the multi-eigenvector problem considered in the previous section. Its most natural formulation is in Stiefel coordinates:
\begin{align} \label{heterogeneous}
 \min_{Z^TZ = {\rm Id}_k} &\sum_{i=1}^k Z_i^T A_i Z_i
\end{align}
where $A_i \in {\rm Sym}(\RR^n)$ and $Z_i$ denotes the $i$th column of $Z$ for $i = 1, \ldots, k$. The multi-eigenvector problem is recovered by choosing $A_i=A$ of all $i=1, \ldots, k$. 

In Stiefel coordinates, the critical points
can be computed using
\begin{align*}
    \nabla \left( \sum_{i=1}^k Z_i^T A_i Z_i \right)= 2\begin{pmatrix} Z_1^TA_1 & Z_2^TA_2 & \cdots & Z_k^TA_k \end{pmatrix}
  \end{align*}
and $\mathrm{Jac}(Z)$ as in \eqref{eq:stiefel-jac}. A more convenient system of equations whose solutions are the same critical points is the following (see \cite[Lemma 1]{oviedo2022}):
$$ [A_1Z_1 \, A_2Z_2 \, \cdots \, A_kZ_k]Z^T - Z[A_1Z_1 \, A_2Z_2 \, \cdots \, A_kZ_k]^T = 0 \mbox{   and    } 
Z^TZ = {\rm Id}_k.$$ 
The optimization problem is invariant under the action of ${\rm O}(1)^k$: if $[Z_1 \, Z_2 \, \cdots \, Z_k]$ is a critical point, so is $[\pm Z_1 \, \pm Z_2 \, \cdots \, \pm Z_k]$. Computing the number of complex critical points of (\ref{heterogeneous}) is a challenging open problem. 
Our numerical computations, produced with \verb+HomotopyContinuation.jl+ \cite{hc}, are summarized in Table~\ref{table:heteroquads}. 
\begin{table}[h] 
    \[
    \begin{array}{|c|c|c|c|c|c|c|c|c|}
    \hline
        & n = 2 &  n = 3 & n = 4 & n=5 & n = 6 & n = 7 & n = 8 & n = 9\\
              \hline
        k = 2 & 8 & 40 & 112 & 240 & 440 & 728 & 1120 & 1632\\
        \hline
        k = 3 & &  80 & 960 & 5536 & 21440 & 64624 & &\\
        \hline
        k = 4 &  &  & 1920 & 57216 & & & & \\
        \hline
    \end{array}\]
    \caption{Degrees of the heterogeneous quadratics minimization problem for small $k, n$.}\label{table:heteroquads}
\end{table}
\begin{conjecture}
The number critical points of the heterogeneous quadratics minimization problem for $k=2$ is $8  \sum_{j=1}^{n-1} j^2.$
\end{conjecture}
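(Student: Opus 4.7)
The plan is to reduce to the case where $A_1 = \mathrm{diag}(a_1, \ldots, a_n)$ and $A_2 = \mathrm{diag}(b_1, \ldots, b_n)$ are generic diagonal matrices (as suggested by the paper's observation generalizing Proposition~\ref{prop:diagonal-3-2}), and to carry out a case analysis based on the off-diagonal Lagrange multiplier. Writing $\alpha = M_{11}, \beta = M_{22}, m = M_{12}$, the critical-point equations become, component-wise,
\begin{equation*}
(a_i - \alpha)\,z_{i1} = m\,z_{i2}, \qquad (b_i - \beta)\,z_{i2} = m\,z_{i1}, \qquad Z^T Z = \mathrm{Id}_2.
\end{equation*}

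When $m = 0$, distinctness of the $a_i$ (resp.\ $b_i$) forces $Z_1 = \pm e_i$ for a unique index $i$, and likewise $Z_2 = \pm e_j$, with $i \neq j$ by orthogonality; this contributes $4n(n-1)$ critical points. When $m \neq 0$, the equations force $\mathrm{supp}(Z_1) = \mathrm{supp}(Z_2) =: S$, and for each $i \in S$ both $a_i - \alpha \neq 0$ and $(a_i - \alpha)(b_i - \beta) = m^2$, with $z_{i2} = \frac{a_i - \alpha}{m} z_{i1}$. I claim $|S| = 3$ is the only feasible size: $|S| \leq 1$ contradicts orthonormality of $(Z_1, Z_2)$; for $|S| = 2$, the orthogonality relation $\sum_{k \in S}(a_k - \alpha) z_{k1}^2 = 0$ combined with $\sum z_{k1}^2 = 1$ and $\sum (a_k - \alpha)^2 z_{k1}^2 = m^2$ forces $(a_i - \alpha)(a_j - \alpha) = -m^2$, which together with the two Case~II equations yields the impossible identity $a_i - b_i = a_j - b_j$; and for $|S| \geq 4$, subtracting pairs of the Case~II equations produces $|S|-1 \geq 3$ linear equations in the two unknowns $(\alpha, \beta)$, generically inconsistent.

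For $|S| = 3$ with $S = \{i, j, l\}$, the two linear equations in $(\alpha, \beta)$ obtained by pairwise subtraction have generically nonvanishing determinant, fixing $(\alpha, \beta)$ and therefore $m^2 \neq 0$. The orthonormality conditions then form a $3 \times 3$ Vandermonde system in $(z_{k1}^2)_{k \in S}$ with determinant $\prod_{k<k'}(a_k - a_{k'}) \neq 0$ and generically nonzero solutions. This yields $2$ choices $m = \pm\sqrt{m^2}$ and $2^3$ sign choices for $(z_{k1})_{k \in S}$, hence $16$ critical points per $3$-subset; the relation $z_{k2} = \frac{a_k -\alpha}{m} z_{k1}$ certifies that these $16$ tuples produce distinct $(Z_1, Z_2)$. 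Summing, the total critical-point count is
\[
4n(n-1) + 16\binom{n}{3} \;=\; 8\sum_{j=1}^{n-1} j^2,
\]
using $\sum_{j=1}^{n-1} j^2 = (n-1)n(2n-1)/6$.

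The main obstacle is the reduction from generic symmetric $A_1, A_2$ to the diagonal subfamily. In the generic symmetric case, Case~I ($m = 0$) contributes \emph{no} critical points at all, since generically no unit eigenvector of $A_1$ is orthogonal to any unit eigenvector of $A_2$; thus the Case~I solutions of the diagonal case must deform into Case~II solutions as one passes from diagonal to generic symmetric matrices. Rigorously matching the two counts likely requires either a mixed-volume (BKK) computation on the Newton polytopes of the full symmetric critical system showing the bound $8\sum_{j=1}^{n-1} j^2$ is attained, or a parameter-homotopy argument ruling out root loss at infinity under specialization. This equivalence, strongly supported by the numerics in Table~\ref{table:heteroquads}, is the technical heart of the conjecture.
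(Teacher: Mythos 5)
First, a caveat: the statement you were asked to prove is stated in the paper as a \emph{conjecture}; the paper offers no proof, only the numerical counts in Table~\ref{table:heteroquads} and the single case $n=3$, $k=2$ with diagonal matrices (Proposition~\ref{prop:diagonal-3-2}). So there is no paper argument to compare against, and your proposal should be judged as a self-contained attempt.

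Your analysis of the diagonal subfamily is essentially correct and is a genuine contribution beyond what the paper does: the support dichotomy, the identity $(a_i-\alpha)(b_i-\beta)=m^2$ on the common support $S$, the elimination of $|S|=2$ via $a_i-b_i=a_j-b_j$, the linear inconsistency for $|S|\ge 4$, the Vandermonde solve for $|S|=3$, and the arithmetic $4n(n-1)+16\binom{n}{3}=8\sum_{j=1}^{n-1}j^2$ all check out (and reproduce the $24+16=40$ structure of Proposition~\ref{prop:diagonal-3-2}). The remaining loose ends inside the diagonal case are routine genericity verifications (a witness showing the $\ge 3$ linear equations in $(\alpha,\beta)$ are inconsistent for $|S|\ge 4$; nonvanishing of $m^2$, of $a_k-\alpha$, $b_k-\beta$, and of the solved values $z_{k1}^2$ for $|S|=3$), plus a check that these $40$-type solutions are nonsingular points of the Lagrange system.

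The genuine gap is the one you name yourself, and it is not a formality: the diagonal matrices form a positive-codimension subfamily of the parameter space ${\rm Sym}(\RR^n)^2$, so the generic root count over that subfamily need not coincide with the generic root count over all symmetric pairs. What your computation yields, granting nonsingularity of the diagonal critical points, is only the lower bound (by the implicit function theorem / parameter continuation, nonsingular solutions at a special parameter persist nearby, so the generic count is at least $8\sum_{j=1}^{n-1}j^2$). The matching upper bound --- that no additional solutions appear for generic symmetric $A_1,A_2$, equivalently that no solution paths are lost at infinity or absorbed into positive-dimensional components when one specializes to diagonal data --- is exactly what is missing, and your own observation that the $m=0$ stratum carries $4n(n-1)$ solutions in the diagonal case but none in the generic symmetric case shows that the solution structure really does change under this specialization, so equality of counts cannot be waved through by a naive genericity argument. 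Until one supplies a BKK/mixed-volume computation for the symmetric critical system attaining $8\sum_{j=1}^{n-1}j^2$, or a degeneration/homotopy argument controlling the limits of the generic solutions as $A_1,A_2$ become diagonal, the proposal proves a theorem about generic \emph{diagonal} data but does not prove the conjecture.
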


\subsection{Diagonal case}
Our computational experiments indicate that the number of critical points of the heterogeneous quadratics minimization problem stays stable if we take the input matrices $A_1, \ldots, A_k$ to be generic {\it diagonal} matrices. While we do not have a general proof for this observation, we present a result addressing the first nontrivial case. 
\begin{proposition} \label{prop:diagonal-3-2}
 Let $A_1 = \mathrm{diag}(a_{11}, a_{12}, a_{13})$ and $A_2 = \mathrm{diag}(a_{21}, a_{22}, a_{23})$ be generic diagonal matrices. Then the algebraic degree of the corresponding heterogeneous quadratics minimization problem \eqref{heterogeneous} is $40$.   
\end{proposition}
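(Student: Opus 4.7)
The plan is a Lagrange multiplier analysis driven by an explicit case split on the multipliers. Writing $A_1 = \mathrm{diag}(a_1,a_2,a_3)$, $A_2 = \mathrm{diag}(b_1,b_2,b_3)$, and introducing multipliers $\alpha, \beta, \gamma$ for the constraints $Z_1^TZ_1=1$, $Z_2^TZ_2=1$, $Z_1^TZ_2=0$, the critical point equations become
\begin{align*}
    (a_i - \alpha)\,z_{i1} = \gamma\, z_{i2}, \qquad (b_i - \beta)\,z_{i2} = \gamma\, z_{i1}, \qquad i = 1, 2, 3,
\end{align*}
supplemented by $Z^TZ = \mathrm{Id}_2$. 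Multiplying the two equations for each $i$ gives the key dichotomy $(a_i-\alpha)(b_i-\beta)\,z_{i1}z_{i2} = \gamma^2 z_{i1}z_{i2}$, so each index is either \emph{inactive} with $z_{i1}z_{i2}=0$ or \emph{active} with $(a_i-\alpha)(b_i-\beta)=\gamma^2$. The entire argument is a case analysis organized by this dichotomy.

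When $\gamma = 0$ the Lagrange equations decouple, and generic distinctness of the $a_i$ and of the $b_j$ forces $\alpha = a_{i_0}$ and $\beta = b_{j_0}$ for unique indices, giving $Z_1 = \pm e_{i_0}$, $Z_2 = \pm e_{j_0}$; the orthogonality constraint $Z_1^TZ_2=0$ then forces $i_0 \neq j_0$. This yields $3 \cdot 2$ ordered index pairs and $2 \cdot 2$ sign choices, for $24$ critical points. When $\gamma \neq 0$, the Lagrange equations force each index to be fully active or fully inactive (namely $z_{i1}=0$ if and only if $z_{i2}=0$), and the rank-two constraint $Z^TZ=\mathrm{Id}_2$ immediately rules out the subcases with at most one active index. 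I would handle the two-active subcase by eliminating $\gamma^2$ and the squared frame variables, producing an algebraic identity of the form $a_{i_1}-a_{i_2} = b_{i_1}-b_{i_2}$ on the data, which fails generically; so only the subcase in which all three indices are active contributes.

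In that remaining subcase the pairwise differences of the three equations $(a_i-\alpha)(b_i-\beta)=\gamma^2$ are linear in $(\alpha,\beta)$, so for generic data $(\alpha,\beta)$ is uniquely determined, $\gamma^2$ is then fixed, and $\gamma$ takes two values. Substituting $z_{i2} = (a_i-\alpha)z_{i1}/\gamma$ into the three frame relations $\sum z_{i1}^2 = 1$, $\sum z_{i1}z_{i2} = 0$, $\sum z_{i2}^2 = 1$ gives a nondegenerate linear system in $t_i = z_{i1}^2$, which determines the $t_i$. The independent sign choices $z_{i1} = \pm\sqrt{t_i}$ produce $2^3 = 8$ critical points for each value of $\gamma$, so this case contributes $2 \cdot 8 = 16$ critical points, and the grand total is $24 + 16 = 40$.

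The main technical obstacle is rigorously tracking genericity through the case analysis: one must verify that the two-active subcase genuinely imposes a nontrivial polynomial condition on $(A_1,A_2)$ and hence contributes nothing generically, that the linear systems determining $(\alpha,\beta)$ and the $t_i$ have full rank, that none of $\gamma$, the $t_i$, or the factors $a_i-\alpha$ vanish accidentally on the generic fibre, and that every complex critical point is scheme-theoretically reduced, so that $40$ is exactly the algebraic degree and not merely an upper bound.
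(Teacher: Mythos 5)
Your proposal is correct, but it proves the proposition by a genuinely different route than the paper. The paper sets up the same Lagrange system ($A_1Z_1 = q_{11}Z_1 + q_{12}Z_2$, $A_2Z_2 = q_{12}Z_1 + q_{22}Z_2$, $Z^TZ = \mathrm{Id}_2$), exhibits the $24$ coordinate solutions by hand, and then certifies the count by computing a Gr\"obner basis over the function field $\QQ(a_{1j}, a_{2j})$ in \texttt{Oscar.jl}, which shows the ideal is zero-dimensional of degree $40$; the remaining $16$ solutions are written down in closed form (your ``$\gamma \neq 0$, all indices active'' family is exactly their $Z^*$ up to row and column sign flips, and your $(\alpha,\beta,\gamma)$ are their $(q_{11}^*, q_{22}^*, q_{12}^*)$ up to scaling). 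You replace the symbolic computation with a hand case analysis on the dichotomy ``$z_{i1}z_{i2}=0$ or $(a_i-\alpha)(b_i-\beta)=\gamma^2$'', and your key claims check out: with $\gamma\neq 0$ each index is fully active or fully inactive, at most one active index contradicts $\mathrm{rank}(Z)=2$, and in the two-active case the relations $(a_{i}-\alpha)=\gamma r_i$, $(b_{i}-\beta)=\gamma/r_i$ together with $r_{i_1}r_{i_2}=-1$ (forced by orthogonality of the nonzero $2\times 2$ block) do eliminate to $a_{i_1}-a_{i_2}=b_{i_1}-b_{i_2}$, which fails generically; the three-active case gives a Vandermonde-type system for the $t_i$ and $2\cdot 2^3 = 16$ distinct points. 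What each approach buys: yours is computer-free and explains structurally why $40 = 24 + 16$; the paper's outsources completeness to the Gr\"obner computation but delivers explicit closed-form critical points. Two remarks to finish your version: the nonvanishing conditions you flag are genuinely needed but are finitely many explicit polynomials (the determinant of your $2\times 2$ system for $(\alpha,\beta)$ is, up to sign, the paper's denominator $\alpha$, and $\gamma^2$ and the $t_i$ have closed forms matching the paper's $q_{12}^*$ and $({Z^*})^2$), so a single rational specialization certifies them; and the worry about scheme-theoretic reducedness is unnecessary, since the algebraic degree here is the number of distinct complex critical points, which your enumeration produces directly.
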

\begin{proof}
We will explicitly describe these $40$ critical points. The critical points are defined by the Lagrange multiplier equations 
$$A_1 Z_1 = q_{11} Z_1 + q_{12}  Z_2, \,\, A_2 Z_2 = q_{12} Z_1 + q_{22} Z_2, \quad Z_1^T Z_1 = 1,\,
Z_1^T Z_2 = 0,\, Z_2^T Z_2 = 1$$
where $q_{11}, q_{12}, q_{22}$ are Lagrange multipliers. 
This is a square system with $9$ variables and $9$ equations.
We obtain $2^2 \cdot 3 \cdot 2 = 24$ solutions by taking $Z_1^* = \pm e_i$, $Z_2^* = \pm e_j$, $q_{11}^* = a_{1i}$, $q_{12}^* = 0$, and $q_{22}^* = a_{2j}$ for all $i \neq j$. 
By computing a Gr\"obner basis of the ideal given by the above equations over the rational function field $\mathbb{Q}(a_{1j}, a_{2j}: \, j=1,2,3)$ we see that this ideal is zero dimensional and has $40$ solutions. In addition to the $24$ solutions we already described, the rest of the $16$ solutions come via row and columns sign flips of
the ${Z^*}$ whose rows we list below:
\begin{align*}
    ({Z^*}^T)_1 = \frac{\sqrt{a_{12} - a_{13} + a_{23} - a_{22}}}{\alpha}
    \begin{pmatrix}
        \sqrt{-(a_{12} - a_{13})(a_{21} - a_{22})(a_{21} - a_{23})}\\
        \sqrt{(a_{22} - a_{23})(a_{11} - a_{12})(a_{11} - a_{13})}
    \end{pmatrix}\\
    ({Z^*}^T)_2 = \frac{\sqrt{a_{11} - a_{13} + a_{23} - a_{21}}}{\alpha}
    \begin{pmatrix}
        \sqrt{-(a_{11} - a_{13})(a_{22} - a_{21})(a_{22} - a_{23})}\\
        \sqrt{(a_{21} - a_{23})(a_{12} - a_{11})(a_{12} - a_{13})}
    \end{pmatrix}\\
    ({Z^*}^T)_3 = \frac{\sqrt{a_{11} - a_{12} + a_{22} - a_{21}}}{\alpha}
    \begin{pmatrix}
        \sqrt{-(a_{11} - a_{12})(a_{23} - a_{21})(a_{23} - a_{22})}\\
        \sqrt{(a_{21} - a_{22})(a_{13} - a_{11})(a_{13} - a_{12})}
    \end{pmatrix}
\end{align*}
where $\alpha =a_{11}a_{22} - a_{11}a_{23} + a_{12}a_{23} - a_{12}a_{21} + a_{13}a_{21} - a_{13}a_{22}$.
The corresponding Lagrange multipliers are 
\begin{align*}
    q^*_{11} 
    &= \frac{1}{\alpha}\big (-a_{11}a_{12}(a_{21} - a_{22}) + a_{11}a_{13}(a_{21} - a_{23}) - a_{12}a_{13}(a_{22} - a_{23})\big )\\
    q^*_{12} 
    &=  \frac{2}{\alpha}\sqrt{-(a_{11} - a_{12})(a_{11} - a_{13})(a_{12} - a_{13})(a_{21} - a_{22})(a_{21} - a_{23})(a_{22} - a_{23})}\\
    q^*_{22}&= \frac{1}{\alpha} \big (a_{21}a_{22}(a_{11} - a_{12}) - a_{21}a_{23}(a_{11} - a_{13}) + a_{22}a_{23}(a_{12} - a_{13})\big ).
\end{align*}
These computations were performed in \verb+Oscar.jl+ \cite{oscar}.
\end{proof}
\subsection{Projection coordinates}
We now formulate the heterogeneous quadratics minimization problem as an optimization problem over a flag variety in projection coordinates.
We rewrite the objective function as
\begin{align*}
   \sum_{i = 1}^n Z_i^T  A_i Z_i = \sum_{i = 1}^n {\rm trace}(Z_i^T  A_i Z_i) = \sum_{i = 1}^k \textrm{trace}(B_i P_i) 
\end{align*}
where $P_i = \sum_{j=1}^i Z_j Z_j^T$ for $i =1 ,\ldots, k$ and $B_i = A_i - A_{i+1}$ for $i = 1, \ldots, k-1$ and $B_k = A_k$. 
Over $\mathrm{pFl}(1,2,\ldots, k;n)$ we can reformulate our problem as follows:
\begin{align} \label{heterogeneous-projection}
  \textrm{minimize } &\sum_{i=1}^k \textrm{trace}(B_iP_i)\\
  \nonumber
  \textrm{subject to } 
  &P_iP_j = P_j,\,\,\textrm{trace}(P_i) = i \textrm{ for } 1\leq j \leq  i \leq k.
\end{align}

\begin{proposition}
    If the heterogeneous quadratics minimization problem \eqref{heterogeneous-projection} has $m$ critical points, then \eqref{heterogeneous} has $2^k m$ critical points in $\mathcal V_{k,n}$.
\end{proposition}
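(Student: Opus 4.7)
The plan is to identify the Stiefel and projection-coordinate optimization problems as related by the natural parametrization $\Phi \colon V_{k,n} \to \mathrm{pFl}(1,2,\ldots,k;n)$ sending $Z \mapsto (Z_1Z_1^T,\, Z_1Z_1^T + Z_2Z_2^T,\, \ldots,\, ZZ^T)$, and then apply Lemma~\ref{lem:critical}. First I would record that $\Phi$ is a surjective morphism between smooth affine varieties (surjectivity follows from the parametric description of $\mathrm{pFl}$ preceding Theorem~\ref{thm:projectionideal}, and smoothness from Theorems~\ref{thm:stiefel} and~\ref{thm:projectionideal}). The fiber of $\Phi$ over a point $(P_1,\ldots,P_k)$ is exactly the $\{\pm 1\}^k$-orbit that flips the signs of the columns of $Z$, because each $P_i - P_{i-1} = Z_i Z_i^T$ determines $Z_i$ up to sign. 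Hence $\Phi$ is a $2^k$-to-$1$ cover. Next I would note that the objective in \eqref{heterogeneous} agrees with the pullback of the objective in \eqref{heterogeneous-projection}: the identity $\sum_{i=1}^k \mathrm{trace}(B_iP_i) = \sum_{i=1}^k Z_i^T A_i Z_i$ follows by expanding $P_i = \sum_{j=1}^i Z_jZ_j^T$ and telescoping the sum of $B_i = A_i - A_{i+1}$ (with $A_{k+1}=0$), as is essentially done in the text preceding \eqref{heterogeneous-projection}.

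The heart of the argument is then to verify that the differential $d\Phi_Z$ is surjective at every $Z \in V_{k,n}$, so that Lemma~\ref{lem:critical} applies. Since $\dim(V_{k,n}) = nk - \binom{k+1}{2} = \dim(\mathrm{pFl}(1,\ldots,k;n))$ (by Theorem~\ref{thm:stiefel} and Proposition~\ref{dim-formula}), it suffices to show $d\Phi_Z$ is injective. A tangent vector $\dot Z \in T_Z(V_{k,n})$ lies in $\ker d\Phi_Z$ precisely when $d(P_i - P_{i-1})(\dot Z) = \dot Z_i Z_i^T + Z_i \dot Z_i^T = 0$ for each $i$; multiplying on the right by $Z_i$ and using $Z_i^T Z_i = 1$ gives $\dot Z_i = -(\dot Z_i^T Z_i)Z_i$, and substituting back forces $\dot Z_i^T Z_i = 0$ and hence $\dot Z_i = 0$. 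So $d\Phi_Z$ is an isomorphism at every point of $V_{k,n}$.

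With the hypotheses of Lemma~\ref{lem:critical} verified for $\phi = \Phi$, $f = \sum_{i=1}^k \mathrm{trace}(B_i P_i)$, and $f' = \sum_{i=1}^k Z_i^T A_i Z_i$, a point $Z \in V_{k,n}$ is a critical point of \eqref{heterogeneous} if and only if $\Phi(Z)$ is a critical point of \eqref{heterogeneous-projection}. Combining this correspondence with the $2^k$-to-$1$ fiber count concludes that $V_{k,n}$ contains exactly $2^k m$ critical points.

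The only subtle step is the infinitesimal injectivity of $\Phi$; everything else is bookkeeping. One could alternatively invoke $O(n)$-equivariance of $\Phi$ to reduce the Jacobian computation to the single point $Z = [e_1 \cdots e_k]$, but the direct linear-algebra argument above works uniformly and avoids choosing a basepoint.
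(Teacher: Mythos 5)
Your proof is correct and follows the same overall architecture as the paper's: pull the problem back along $\Phi \colon Z \mapsto (P_1,\ldots,P_k)$, note the objectives agree, apply Lemma~\ref{lem:critical}, and count the $2^k$-element fibers (your observation that $P_i-P_{i-1}=Z_iZ_i^T$ is a trace-one symmetric idempotent, so $Z_i$ is determined up to sign, is exactly the needed fiber description). The one place you genuinely diverge is the verification that $d\Phi_Z$ is surjective. The paper reduces to the basepoint $Z=[e_1\cdots e_k]$ and argues that the stacked Jacobians of the maps $V_{j,n}\to \mathrm{pGr}(j,n)$ send the $k(n-k)+\binom{k}{2}$ tangent basis vectors to linearly independent images, a computation it only sketches; you instead show the kernel of $d\Phi_Z$ is trivial at \emph{every} $Z\in V_{k,n}$ by the clean identity $\dot Z_iZ_i^T+Z_i\dot Z_i^T=0 \Rightarrow \dot Z_i=-(\dot Z_i^TZ_i)Z_i \Rightarrow \dot Z_i=0$ (valid over $\CC$ since it only uses the bilinear form and $Z_i^TZ_i=1$), and then use $\dim V_{k,n}=nk-\binom{k+1}{2}=\dim \mathrm{pFl}(1,\ldots,k;n)$ to upgrade injectivity to surjectivity onto the tangent space of the smooth variety $\mathrm{pFl}$. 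This buys a uniform, basepoint-free argument that makes explicit the step the paper leaves as ``a careful computation,'' at the mild cost of invoking the dimension equality between source and target, which the paper's rank computation does not need.
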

\begin{proof}
The map from the Stiefel formulation of the flag variety $\mathrm{Fl}(1,2,\ldots, k;n)$ to the projection formulation $\mathrm{pFl}(1,2,\ldots,k;n)$  given by
$Z \mapsto (P_1, \ldots, P_k)$ where $P_i = \sum_{j=1}^i Z_jZ_j^T$
is $2^k$ to $1$ since $[\pm Z_1 \, \pm Z_2 \, \ldots \, \pm Z_k]$ map to the same point. We proceed as in the proof of Theorem \ref{thm:eigen-pgr-degree}. A basis for the tangent space of $\mathcal V_{k,n}$ at $Z = [e_1 \cdots e_k]$ 
consists of  $k(n-k)$ standard unit vectors $e_{ij}$ for $i=k+1, \ldots, n$ and $j=1, \ldots, k$, and the vectors $e_{ij}-e_{ji}$ for $1 \leq i < j \leq k$. The Jacobian of the above $2^k$-to-$1$ parametrization map consists of a stack of $k$ Jacobians where each individual Jacobian is the Jacobian of the parametrization map from $\mathcal V_{j,n}$ for $j=1, \ldots, k$ as we computed in the proof of Theorem \ref{thm:eigen-pgr-degree}. A careful computation shows that the images of the $k(n-k) + \binom{k}{2}$ vectors under the Jacobian of the parametrization map stay linearly independent. Hence, this image has dimension $\dim({\rm pFl}(1,2,\ldots, k;n))$. Therefore, the differential of the parametrization map is surjective. Lemma \ref{lem:critical} implies that the critical points on $\mathcal V_{k,n}$ are mapped to the critical points on ${\rm pFl}(1,2,\ldots, k;n)$. 
\end{proof}

\begin{corollary}
    The degree of the heterogeneous quadratics minimization problem over the  flag variety ${\rm pFl}(1, 2, \ldots, k; n)$ is equal to the LO degree of ${\rm pFl}(1, 2, \ldots, k; n)$. 
    Therefore, both \eqref{heterogeneous} and \eqref{heterogeneous-projection}
 have finitely many critical points. 
 \end{corollary}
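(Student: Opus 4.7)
The plan is a short reparametrization argument followed by an appeal to the definition of LO degree. First I would observe that the objective in \eqref{heterogeneous-projection} is already a linear form in the coordinates of the ambient affine space $({\rm Sym}(\CC^n))^k$: expanding the trace gives
\[
\sum_{i=1}^k {\rm trace}(B_i P_i) \;=\; \sum_{i=1}^k \sum_{a,b} (B_i)_{ab}\,(P_i)_{ab},
\]
so \eqref{heterogeneous-projection} is literally an instance of linear optimization over ${\rm pFl}(1,2,\ldots,k;n)$.

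Next I would show that the space of linear forms arising this way, as $(A_1,\ldots,A_k)$ ranges over $k$-tuples of symmetric matrices, is all of the space of linear forms on $({\rm Sym}(\CC^n))^k$. This is because the assignment $(A_1,\ldots,A_k)\mapsto (B_1,\ldots,B_k)$ defined by $B_k=A_k$ and $B_i=A_i-A_{i+1}$ for $i<k$ is an invertible linear map on $({\rm Sym}(\CC^n))^k$, with inverse $A_i=\sum_{j\geq i} B_j$. Hence generic $A_i$ correspond to generic $B_i$, and the degree of \eqref{heterogeneous-projection} for generic $A_i$ is exactly the number of complex critical points of a generic linear form on ${\rm pFl}(1,2,\ldots,k;n)$, which is by definition the LO degree of this variety.

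For the finiteness assertion I would invoke the standard conormal-variety argument. The variety ${\rm pFl}(1,2,\ldots,k;n)$ is smooth and irreducible by Theorem~\ref{thm:projectionideal}, so its conormal variety inside ${\rm pFl}(1,2,\ldots,k;n)\times ({\rm Sym}(\CC^n))^k$ has dimension equal to $\dim({\rm Sym}(\CC^n))^k$. The projection to the second factor is therefore generically finite, and a generic linear form has finitely many critical points on ${\rm pFl}(1,2,\ldots,k;n)$; this count is the LO degree and is a finite positive integer. Finally, the previous proposition provides a $2^k$-to-$1$ morphism from the critical locus on $V_{k,n}$ to the critical locus on ${\rm pFl}(1,2,\ldots,k;n)$, so \eqref{heterogeneous} also has finitely many critical points.

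I do not foresee any substantive technical obstacle here: the key step is simply the observation that the change of variables $A\leftrightarrow B$ is an isomorphism, so the heterogeneous quadratics problem is indistinguishable, as an optimization problem on ${\rm pFl}(1,2,\ldots,k;n)$, from a generic linear optimization problem. The only care required is to justify finiteness of the LO degree without implicit use of compactness over $\CC$, which is exactly what the conormal argument supplies.
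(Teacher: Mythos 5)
Your proposal is correct and follows essentially the same route the paper intends: the objective in projection coordinates is the linear form $\sum_i \mathrm{trace}(B_iP_i)$, the change of variables $(A_1,\ldots,A_k)\leftrightarrow(B_1,\ldots,B_k)$ is an invertible linear map so generic $A_i$ give a generic linear form, hence the degree is the LO degree of ${\rm pFl}(1,\ldots,k;n)$, and the preceding $2^k$-to-$1$ proposition transfers finiteness back to the Stiefel formulation \eqref{heterogeneous}. Your conormal-variety argument simply makes explicit the finiteness of the LO degree, which the paper takes as known from the cited work \cite{MRWW}; this is a welcome but not substantively different addition.
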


\section{Two Problems from Statistics}\label{sec:statisticsproblems}
In this section, we discuss two  optimization problems from statistics which involve flags, namely \emph{canonical correlation analysis} and \emph{correspondence analysis}. Our goal is to describe and count the number of complex critical points of these optimization problems. 
The formulations are taken from \cite[Section 1.2]{YWL}.

\subsection{Canonical correlation analysis}
Canonical correlation analysis is a technique in statistics for pairing up corresponding parts of a pair of data sets \cite{MKB}. 
The problem is formulated as follows. 
Let $X, Y$ be $n \times p$ and $n \times q$ data matrices where $n$ is the common sample size.
Let $S_X, S_Y, S_{XY}$ denote the sample covariance matrices. 
The $k$th pair $(a_k,b_k) \in \RR^p \times \RR^q$ of canonical correlation loadings is 
\begin{align*}
    (a_k, b_k) = {\rm argmax}\{&a^T S_{XY}b
    \colon a^T S_X a = b^T S_Y b = 1, \\
    &a^T S_X a_j = a^T S_{XY} b_j 
    = b^T S_{YX} a_j = b^T S_Y b_j = 0, 
    \, \, j  = 1, \ldots, k-1\}.
\end{align*}
We perform a standard simplification via the Cholesky factorization $S_X = P^TP$ and $S_Y = Q^TQ$ where $P, Q$ are upper triangular. 
We substitute $A = P^{-T} S_{XY} Q^{-1}$, $u = Pa$, and $v = Qb$ to obtain the simpler problem 
\begin{align*}    
    (u_k, v_k) = {\rm argmax}\{&u^T A v
    \colon u^Tu = v^Tv = 1, \\ \nonumber
    &u^Tu_j = u^T A v_j 
    = v^T A^T u_j = v^Tv_j = 0, 
    \, \, j  = 1, \ldots, k-1\}.
\end{align*}
If we collect $u_1, \ldots, u_k$ and $v_1, \ldots, v_k$ into the $p \times k$ matrix $U$ and $q \times k$ matrix $V$, respectively, then $(U, V)$ represents a pair of flags in ${\rm Fl}(1,2, \ldots, k;p) \times {\rm Fl}(1,2, \ldots, k;q)$ \cite{YWL}.

We say the pair $(U, V) \in \CC^{p \times k} \times \CC^{q \times k}$ is a critical point of the canonical correlation problem if for all $i = 1, \ldots, k$, the pair $(u_k, v_k)$ is a critical point of the optimization problem 
\begin{align}\label{eqn:ccabrokendown}
    \textrm{maximize } &u^T A v\\\nonumber
    \textrm{subject to } &u^Tu = v^Tv = 1\\\nonumber
    &u^Tu_j = u^TAv_j = 0, \,\, j=1, \ldots, k-1\\\nonumber
    &v^TA^Tu_j = v^Tv_j = 0,  \,\, j =1, \ldots, k-1.
\end{align}
The critical points are given by the singular value decomposition of $A$ \cite{CG22}. 
\begin{theorem} \label{thm:canonical}
The critical points of the canonical correlation analysis problem are the pairs $(U, V) \in \CC^{p \times k} \times \CC^{q \times k}$
where the columns of $U$ are the left singular unit vectors of $A$, the columns of $V$ are the right singular unit vectors of $A$, and  corresponding columns have the same singular value.
There are $\binom{\min(p, q)}{k} k! 2^k $ critical points.
\end{theorem}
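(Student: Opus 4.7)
The plan is to prove Theorem~\ref{thm:canonical} by induction on $i$ from $1$ to $k$, showing at each stage that the stationarity conditions force $(u_i, v_i)$ to be a singular vector pair of $A$, and then to count the resulting configurations.

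For the base case $i=1$, I would consider the Lagrangian
\begin{align*}
    L = u^T A v - \alpha(u^T u - 1) - \beta(v^T v - 1),
\end{align*}
whose stationarity conditions are $Av = 2\alpha u$ and $A^T u = 2\beta v$. Left-multiplying by $u^T$ and $v^T$ and invoking the primal constraints yields $2\alpha = u^T A v = v^T A^T u = 2\beta$, so there is a common multiplier $\lambda := 2\alpha = 2\beta$ with $Av = \lambda u$ and $A^T u = \lambda v$. Consequently $AA^T u = \lambda^2 u$, so $u$ is a unit eigenvector of $AA^T$ and $(u, v)$ is a singular vector pair of $A$.

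For the inductive step, I would assume $(u_j, v_j)_{j<i}$ are singular vector pairs with $Av_j = \sigma_j u_j$ and $A^T u_j = \sigma_j v_j$. The cross-constraints $u^T A v_j = 0$ and $v^T A^T u_j = 0$ in \eqref{eqn:ccabrokendown} then simplify to $\sigma_j u^T u_j = 0$ and $\sigma_j v^T v_j = 0$, redundant with $u^T u_j = 0$ and $v^T v_j = 0$ when $\sigma_j \neq 0$ (the generic case). Writing the Lagrangian with multipliers $\gamma_j, \delta_j$ for the orthogonality constraints, $\nabla_u L = 0$ gives
\begin{align*}
    Av = 2\alpha u + \sum_{j<i}\gamma_j u_j.
\end{align*}
Dotting by $u_\ell^T$ and using orthonormality of the $u_j$'s together with $u_\ell^T A v = \sigma_\ell v_\ell^T v = 0$ forces $\gamma_\ell = 0$; an analogous argument gives $\delta_\ell = 0$. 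The base-case analysis then applies to $(u_i, v_i)$, and orthogonality to earlier columns forces it to correspond to a new singular value.

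The count follows: each critical $(U, V)$ corresponds to an ordered selection of $k$ distinct singular vector pairs from the $\min(p,q)$ available, contributing $\binom{\min(p,q)}{k}\, k!$, together with an independent sign $(u_i, v_i) \mapsto \pm(u_i, v_i)$ on each column, contributing $2^k$, giving $\binom{\min(p,q)}{k}\, k!\, 2^k$ in total. The main obstacle is the vanishing of $\gamma_\ell$ and $\delta_\ell$: this is where the inductive singular vector structure is genuinely used, and without it the stationarity conditions would form a coupled linear system rather than cleanly reducing to the singular vector equations $Av = \lambda u$, $A^T u = \lambda v$ that close the induction.
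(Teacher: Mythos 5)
Your proposal is correct and takes essentially the same approach as the paper: an induction on $i$ in which the stationarity (Lagrange multiplier) conditions are exactly the paper's rank-drop of the augmented Jacobian, the inductive hypothesis $Av_j=\sigma_j u_j$, $A^Tu_j=\sigma_j v_j$ makes the cross-constraint gradients redundant, and pairing with the earlier vectors kills the extra multipliers, reducing to the base-case singular-pair equations. The counting argument is also the same as the paper's.
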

\begin{proof}
    The count follows from the description of the critical points: choose $k$ unit left and right singular vectors of $A$, permute the corresponding vectors simultaneously, and flip  signs of them as desired. 
    We will prove by induction that for each $i$, there exist $\lambda_i, \eta_i$ such that $Av_i = \lambda_i u_i$ and $A^T u_i = \eta_i v_i$ if and only if the pair $(u_i, v_i)$ is a critical point of \eqref{eqn:ccabrokendown}. 
    
    We proceed by induction on $i$.
    When $i = 1$, the optimization problem simplifies to 
    $\max_{u^Tu = v^Tv = 1} \,\, v^TAu.$
    By computing the transpose of the Jacobian, we find that the critical points of this problem are characterized by the leftmost column of the matrix
    \begin{align*}
        \begin{pmatrix}
            Av & u & 0\\
            A^T u & 0  & v
        \end{pmatrix}
    \end{align*}
    being in the span of the rest of the columns. 
    We therefore have that $(u,v)$ is a critical point if and only if there exist $\lambda_1, \eta_1$ such that $Av = \lambda_1 u$ and $A^Tu = \eta_1 v$. 

    Suppose now that for $j = 1, \ldots, i - 1$, we have that $Av_j = \lambda_j u_j$ and $A^T u_j = \eta_j v_j$. 
    The transpose of the augmented Jacobian matrix of \eqref{eqn:ccabrokendown} is 
    \begin{align*}
    &\left (
     \begin{array}{ccccccccccccccc}
            Av_i & u_1 & \cdots & u_i & 0 & \cdots & 0 & Av_1 & \cdots & Av_{i-1}& 0 & \cdots & 0\\
            A^Tu_i & 0 & \cdots & 0 & v_1 & \cdots & v_i & 0 & \cdots & 0 & A^Tu_1 & \cdots & A^Tu_{i-1}
        \end{array}
        \right)\\
        = &   \left (
     \begin{array}{ccccccccccccccc}
            Av_i & u_1 & \cdots & u_i & 0 & \cdots & 0 & \lambda_1 u_1 & \cdots & \lambda_{i-1}u_{i-1}& 0 & \cdots & 0\\
            A^Tu_i & 0 & \cdots & 0 & v_1 & \cdots & v_i & 0 & \cdots & 0 & \eta_1v_1 & \cdots & \eta_{i-1}v_{i-1}
        \end{array}
        \right).
     \end{align*}
    Proving that this matrix drops rank is equivalent to proving that the matrix
    \begin{align*}
         \begin{pmatrix}
            Av_i  & u_1 & \cdots & u_i & 0 & \cdots & 0\\
            A^T u_i & 0 & \cdots & 0 & v_1 & \cdots & v_i
            \end{pmatrix}
    \end{align*}
    drops rank. 
    It is clear that the matrix drops rank if $Av_i \in {\rm span}(u_i)$ and $A^Tu_i \in {\rm span}(v_i)$.
    Conversely, suppose that
    \begin{align*}
        Av_i = \alpha_1 u_1 + \cdots + \alpha_i u_i,\\
        A^T u_i = \beta_1 v_1 + \cdots + \beta_i v_i.
    \end{align*}
    Multiplying the first equation by $u_j^T$ for $j = 1, \ldots, i-1$ gives $0 = u_j^T A v_i = \alpha_j$. 
    By symmetry, $\alpha_j = \beta_j = 0$ for $j =1, \ldots, i-1$. 
    Thus $Av_i = \alpha_i u_i$ and $A^T u_i = \beta_i v_i$. 
\end{proof}

\subsection{Correspondence Analysis} 
Correspondence analysis (CA) is a statistical optimization problem over a pair of Grassmannians; see \cite{CG22, YWL}. 
CA is an analog of principal component analysis for categorical data. 
The data for CA come in the form of an $n \times p$ matrix $X$ known as a contingency table. 
We let $\mathbf{1}$ be the all-ones vector of appropriate size and set $t = \mathbf{1}^T X{\bf 1} \in \RR$. 
The row and column weights are defined as 
\begin{align*}
    r = \frac 1 t X {\bf 1} \in \RR_{}^n &&
    c = \frac 1 t X^T {\bf 1} \in \RR_{}^p\\
     D_r = \frac 1 t {\rm diag}(r) \in \RR_{}^{n \times n} &&
    D_c = \frac 1 t {\rm diag}(c) \in \RR_{}^{p \times p}.
\end{align*}
For $k = 1, \ldots, p$, we seek a pair of matrices $(U_k, V_k) \in \RR^{n \times k} \times \RR^{p \times k}$ such that 
\begin{align*}
    (U_k, V_k) = {\rm argmax}\{{\rm trace}(U^T(\frac 1 t X - rc^T)V) \colon U^TD_rU = V^TD_cV = {\rm Id}_k\}.
\end{align*}
We begin with two simplifications of this problem. 
The first is standard in correspondence analysis: since $D_r, D_c$ are diagonal, they can be factored into $U, V$, respectively, by replacing $U$ with $\sqrt{D_r}U$ and $V$ with $\sqrt{D_c}V$. 
The second is to replace the matrix $\frac 1 t X - rc^T$ from statistics with a generic matrix $A \in \RR^{n \times p}$.
The new optimization problem is
\begin{align}\label{eq:CA}
    \max_{U^TU = V^TV = {\rm Id}_k} &{\rm trace}(U^TAV).
\end{align}
The solution to this problem is given by the singular value decomposition of $A$; see \cite{CG22}. 
The problem is invariant under a simultaneous ${\rm O}(k)$-action on $U$ and $V$; hence it is an optimization problem over the product of Grassmannians ${\rm Gr}(k, n) \times {\rm Gr}(k, p)$.

\begin{theorem}\label{thm:svd}
  Let $A$ be a generic real $n \times p$ matrix with $p>n$, let $U$ be an $n \times k$ variable matrix and let $V$ be a $p \times k$ variable matrix. 
  The algebraic set of complex critical points of the optimization problem \eqref{eq:CA} 
  is equal to 
    \begin{align}\label{eq:cacp}
     \bigsqcup_{\{i_1, \ldots, i_k\} \in \binom{[n]}{k}} \{([u_{i_1} \, u_{i_2} \, \cdots \, u_{i_k}] Q,\,\, [v_{i_1} \, v_{i_2} \, \cdots \, v_{i_k}] Q) \, : \, Q \in {\rm O}(k)\} 
  \end{align}
  where $u_1, \ldots, u_n$ is an orthonormal basis of left singular vectors for $A$ and $v_1, \ldots, v_n$ is an orthogonal basis of right singular vectors for $A$ such that $u_i, v_i$ share a common singular value for $i =1, \ldots, n$. 
  This algebraic set is a disjoint union of $\binom{n}{k}$ 
  varieties isomorphic to ${\rm O}(k)$.
\end{theorem}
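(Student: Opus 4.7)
The plan is to apply Lagrange multipliers, obtain a coupled pair of eigenvalue-type equations linking $U, V$ to the SVD of $A$, and then invoke Lemma~\ref{lem:diagonalization} to match critical points to subsets of singular vector pairs, closely paralleling the strategy used for Theorem~\ref{thm:eigen}.

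First, since both constraints $U^TU = \mathrm{Id}_k$ and $V^TV = \mathrm{Id}_k$ take values in symmetric matrices, their Lagrange multipliers $M, N \in \mathrm{Sym}(\mathbb{C}^k)$ may be taken symmetric, and the stationarity conditions read $AV = UM$ and $A^TU = VN$. Left-multiplying by $U^T$ and $V^T$ respectively yields $M = U^TAV$ and $N = V^TA^TU = M^T = M$, so $M = N$. Substituting back produces the pair
\begin{align*}
    AA^T U = UM^2, \qquad A^T A V = VM^2.
\end{align*}

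Second, because $A$ is generic and $p > n$, the matrix $AA^T$ is symmetric positive definite with simple eigenvalues $\sigma_1^2, \ldots, \sigma_n^2$ (squares of the singular values of $A$) and one-dimensional eigenspaces $\mathbb{C}\cdot u_1, \ldots, \mathbb{C}\cdot u_n$. Lemma~\ref{lem:diagonalization} applied to $AA^T U = UM^2$ shows that $M^2$ is orthogonally diagonalizable, say $M^2 = Q^T\Lambda Q$ with $Q \in \mathrm{O}(k)$. The columns of $UQ^T$ are then orthonormal eigenvectors of $AA^T$; simplicity of the spectrum forces each to be $\pm u_{i_j}$ for some $\{i_1, \ldots, i_k\} \in \binom{[n]}{k}$, and absorbing signs into $Q$ gives $U = [u_{i_1} \cdots u_{i_k}] Q$. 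The parallel argument for $A^TAV = VM^2$ uses that the nonzero eigenvalues of $A^TA$ are also $\sigma_1^2, \ldots, \sigma_n^2$ (with eigenspaces $\mathbb{C}\cdot v_1, \ldots, \mathbb{C}\cdot v_n$) to identify $VQ^T$, up to signs, with $[v_{i_1} \cdots v_{i_k}]$; the index set matches because the diagonal of $\Lambda$ equals $\mathrm{diag}(\sigma_{i_1}^2, \ldots, \sigma_{i_k}^2)$ in both analyses.

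Third, pin down that the orthogonal matrix on the $V$-side equals the $Q$ used for $U$. Substituting the candidate forms into $AV = UM$ with $M = U^TAV$ produces a matrix identity relating $\Sigma = \mathrm{diag}(\sigma_{i_1}, \ldots, \sigma_{i_k})$ and the residual sign diagonals on the two sides; together with the SVD convention $Av_i = \sigma_i u_i$ (which fixes the simultaneous signs of each pair $(u_i, v_i)$), this forces $V = [v_{i_1} \cdots v_{i_k}] Q$. A direct substitution then verifies that every pair of the claimed form satisfies the Lagrange system, completing the characterization. The $\binom{n}{k}$ components are pairwise disjoint because the column spans of $U$ lie in distinct $k$-dimensional coordinate subspaces $\mathrm{span}(u_{i_1}, \ldots, u_{i_k})$, and each component is isomorphic to $\mathrm{O}(k)$ via the $Q$-parametrization. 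The most delicate step will be the third paragraph: a priori the orthogonal diagonalizations on the $U$- and $V$-sides could be related by an additional signed diagonal factor, and extracting the exact sign-matching from the original equation $AV = UM$ (rather than from the derived squared equation $AA^TU = UM^2$) is what forces a single common $Q$.
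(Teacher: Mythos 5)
Your first two paragraphs follow the same route as the paper: the Lagrange conditions $AV=UM$, $A^TU=VN$ with symmetric multipliers, the identity $M=U^TAV=V^TA^TU=N$, the derived equation $AA^TU=UM^2$, and the appeal to Lemma~\ref{lem:diagonalization} all appear in the paper's proof. The gap is exactly in your third paragraph, and it cannot be closed as you describe, because the sign-matching you want to extract from $AV=UM$ is not a consequence of the critical point equations. Fix the convention $Av_i=\sigma_i u_i$, $A^Tu_i=\sigma_i v_i$, and take $U=[u_{i_1}\cdots u_{i_k}]Q$, $V=[v_{i_1}\cdots v_{i_k}]DQ$ with $D$ an arbitrary diagonal matrix of signs and $\Sigma=\mathrm{diag}(\sigma_{i_1},\ldots,\sigma_{i_k})$. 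Then $AV=[u_{i_1}\cdots u_{i_k}]\Sigma DQ=UM$ and $A^TU=[v_{i_1}\cdots v_{i_k}]\Sigma Q=VM$ both hold with the single symmetric multiplier $M=Q^T\Sigma DQ$ (diagonal matrices commute), so every such pair satisfies the full Lagrange system regardless of $D$; for $k=1$ the pair $(u_1,-v_1)$ is an honest critical point with $M=-\sigma_1$. Hence no substitution back into the stationarity equations can ``force $V=[v_{i_1}\cdots v_{i_k}]Q$'' under that convention, and the step you yourself flag as the delicate one is where the argument fails.

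The paper sidesteps this by never decoupling into the two squared problems for $AA^T$ and $A^TA$. After establishing $M=N$, it uses $AA^TU=UM^2$ and Lemma~\ref{lem:diagonalization} (together with genericity, so the eigenvalues of $M^2$ are distinct and nonzero) to conclude that $M$ itself is orthogonally diagonalizable, writes $M=Q^T\Lambda Q$, and conjugates the coupled equations by this single $Q$, obtaining $A(VQ^T)=(UQ^T)\Lambda$ and $A^T(UQ^T)=(VQ^T)\Lambda$. The same $Q$ thus acts on both factors automatically, at the price that the eigenvalue $\lambda_j$ of $M$ attached to the $j$th column pair may be $-\sigma_{i_j}$; the sign is absorbed into the choice of paired bases $u_i, v_i$ ``sharing a common singular value'' in \eqref{eq:cacp} rather than eliminated, which is precisely the freedom your stricter SVD convention removes. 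If you want to keep your decoupled route, you must do the same bookkeeping: identify each column pair of $(UQ^T,VQ^T)$ with $(\pm u_{i_j},\pm v_{i_j})$, re-choose the sign of $v_{i_j}$ so that the pair shares the common eigenvalue $\lambda_j$ of $M$, and only then conclude that one common $Q$ works; asserting that the convention $Av_i=\sigma_i u_i$ is forced is false, as the critical points $([u_{i_1}\cdots u_{i_k}]Q,\,[v_{i_1}\cdots v_{i_k}]DQ)$ with $D\neq\mathrm{Id}_k$ show.
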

\begin{proof}
As in the proof of Theorem~\ref{thm:eigen}, a pair $(U, V)$ is a critical point if and only it satisfies $U^TU = V^TV = {\rm Id}_k$ and there exist symmetric matrices $M, N$ such that $AV = UM$ and $A^TU = VN$. 
We must have $M = N$, since $U^TAV = U^TUM = M$ and $V^TA^TU = V^TVN = N$ and $M, N$ are symmetric. 
Thus $(U,V)$ is a critical point if and only if $U^TU = V^TV = {\rm Id}_k$ and there exists a single symmetric matrix $M$ with $AV = UM$ and $A^TU = VM$. 
The matrix $M$ satisfies $AA^TU = UM^2$ where $AA^T$ is full rank so by Lemma~\ref{lem:diagonalization}, $M^2$ is orthogonally diagonalizable, which implies $M$ is orthogonally 
diagonalizable.
We write $M = Q^T \Lambda Q$ for the spectral decomposition of $M$. 
Then the constraints become $AVQ^T = UQ^T \Lambda$ and $A^TUQ^T = VQ^T\Lambda$ which is precisely what it means for $(UQ^T, VQ^T)$ to be in the set \eqref{eq:cacp}.
\end{proof}

The following problem  is equivalent to \eqref{eq:CA}:  
\begin{align} \label{eq:CA_SVD}
    \max_{MM^T \in {\rm pGr}(k,p)} &{\rm trace}(AM)
\end{align}
where we use the map $(U,V) \mapsto M = VU^T \in \mathbb{R}^{p \times n}$. Note that if a matrix 
$M \in \mathbb{R}^{n \times p}$ is such that 
$MM^T$ is in ${\rm Gr}(k,p)$, then the singular values of $M$ consist of $k$ ones. Therefore $M$
has a truncated singular value decomposition $M = V{\rm Id}_k U^T$ where $(U,V) \in \mathcal V_{k,n} \times \mathcal V_{k,p}$.
\begin{corollary}
The algebraic degree of \eqref{eq:CA_SVD} is $\binom{n}{k}$. The critical points are matrices $$M=[v_{i_1} v_{i_2} \cdots v_{i_k}] [u_{i_1} u_{i_2} \cdots u_{i_k}]^T$$ 
where $u_1, \ldots, u_n$ is an orthonormal basis of left singular vectors for $A$ and $v_1, \ldots, v_n$ is an orthogonal basis of right singular vectors for $A$ such that $u_i, v_i$ share a common singular value for $i =1, \ldots, n$. 
\end{corollary}
\begin{proof}
We proceed using Lemma~\ref{lem:critical}.
Because there is a transitive ${\rm O}(n) \times {\rm O}(p)$-action on the product of Stiefel manifolds $\mathcal V_{k,n} \times \mathcal V_{k,p}$, it suffices to consider the tangent space of $\mathcal V_{k,n} \times \mathcal V_{k,p}$ at the point $(U,V) = ([{\rm Id}_k \,\, 0_{k \times n-k}]^T, [{\rm Id}_k \,\, 0_{k \times p-k}]^T)$.
By computing the Jacobian, one verifies that this tangent space contains the linearly independent set 
\[\{e_{ij} : i \in \{k+1, \ldots, n\}, j \in [k] \} 
\cup \{f_{ij} : i \in \{k+1, \ldots, p\}, j \in [k] \} \cup \{e_{ij} - e_{ji}: i \neq j \in [k]\}\]
where $e_{ij}, f_{ij} \in \mathbb R^{nk + pk}$ are the indicator vectors for the variables $u_{ij}$ and $v_{ij}$, respectively.
One verifies by computation that these $nk + pk - k^2- \binom{k+1}{2}$ vectors remain linearly independent after applying the differential. 
The dimension of the image of the map $(U,V) \mapsto VU^T$ is the dimension of the domain $\mathcal V_{k,n} \times \mathcal V_{k,p}$ minus the dimension of the fibers each of which is a copy of ${\rm O}(k)$; this dimension matches the number of linearly independent vectors, so the differential is surjective.
By Lemma~\ref{lem:critical}, the critical points of \eqref{eq:CA_SVD} are precisely the images of the critical points of \eqref{eq:CA} under the map $(U,V) \mapsto VU^T$ and the statement follows from Theorem~\ref{thm:svd}.
\end{proof}

\bigskip

\noindent
\footnotesize \textbf{Authors' addresses:}

\medskip 
\noindent{Department of Mathematics, University of California, Berkeley} \hfill \texttt{hannahfriedman@berkeley.edu}
\\
\noindent{Department of Mathematics, San Francisco State University } \hfill \texttt{serkan@sfsu.edu}
\end{document}